\newtheorem{theorem}{Theorem}[section]
\newtheorem{definition}[theorem]{Definition}
\newtheorem{lemma}[theorem]{Lemma}
\numberwithin{equation}{section}
\begin{document}
\title[Nonnegative solutions for the fractional Laplacian]{Nonnegative solutions for the fractional Laplacian involving a nonlinearity with zeros}
\author[Salom\'on Alarc\'on]{Salom\'on Alarc\'on$^{\dag,1}$}
\author[Leonelo Iturriaga]{Leonelo Iturriaga$^{\dag,2}$}
\author[Antonella Ritorto]{Antonella Ritorto$^{\ddag,3}$}
\subjclass[2010]{Primary: 35B20, 35B40; Secondary: 35J60, 35B38}
\keywords{Nonlinear elliptic equation, Mountain Pass theorem, sub- and super-solutions, nonnegative solutions}
\address[$\dag$]{Departamento de  Matem\'atica, Universidad T\'ecnica Federico Santa Mar\'ia, Casilla 110-V, Valpara\'iso, Chile.}
\address[$\ddag$]{Mathematical Institute, Utrecht University, Hans Freudenthalgebouw,
	Budapestlaan 6, 3584 CD, Utrecht, Netherlands.}
\email[$1$]{salomon.alarcon@usm.cl}
\email[$2$]{leonelo.iturriaga@usm.cl}
\email[$3$]{{\text{a.ritorto@uu.nl}}}
\thanks{All authors were partially supported by FONDECYT\,grant\,1171691, Chile. Besides, two first authors were partially supported by\,FONDECYT grants 1161635 and 1181125, Chile. }
\date{\today}
\maketitle
\begin{abstract}
We study the nonlocal nonlinear problem 
\begin{equation}\label{ppp}
\left\{
\begin{array}[c]{lll}
(-\Delta)^s u = \lambda f(u) & \mbox{in }\Omega, \\
u=0&\mbox{on } \mathbb{R}^N\setminus\Omega,
\end{array}
\right.
\tag{$P_{\lambda}$}
\end{equation}
where $\Omega$ is a  bounded smooth domain in $\mathbb{R}^N$\!,\,$N>2s$,\,$0<s<1$; $f:\mathbb{R}\rightarrow [0,\infty)$ is a nonlinear continuous function such that $f(0)=f(1)=0$ and $f(t)\sim |t|^{p-1}t$ as $t\rightarrow 0^+$, with $2<p+1<2^*_s$; and $\lambda$ is a positive parameter. 
We prove the existence of two nontrivial solutions $u_{\lambda}$ and $v_{\lambda}$ to (\ref{ppp}) such that $0\le u_{\lambda}< v_{\lambda}\le 1$ for  all sufficiently large $\lambda$.  
The first solution $u_{\lambda}$ is obtained by applying the Mountain Pass Theorem, whereas the second, $v_{\lambda}$,  via the sub- and super-solution method. 
We point out that our results hold regardless of the behavior of the nonlinearity $f$ at infinity. In addition, we obtain that these solutions belong to $L^{\infty}(\Omega)$.
\end{abstract}

\setcounter{equation}{0}
\section{Introduction}
This paper concerns with the existence of nonnegative solutions of the following nonlocal nonlinear elliptic problem
\begin{equation}\label{P}
\left\{
\begin{array}[c]{lll}
(-\Delta)^s u = \lambda f(u) & \mbox{in }\Omega, \\
u=0&\mbox{on } \mathbb{R}^N\setminus\Omega,
\end{array}
\right.
\end{equation}
where $\Omega$ is a  bounded smooth domain in $\mathbb{R}^N$, $N>2s$, $0<s<1$; $f:\mathbb{R}\rightarrow [0,\infty)$ is a nonlinear continuous function such that $f(0)=f(1)=0$ and $f(t)\sim |t|^{p-1}t$ as $t\rightarrow 0^+$, with $1<p<\frac{N+2s}{N-2s}=2_s^*-1$; and $\lambda$ is a positive parameter. 

In the case $s=1$, the local version of (\ref{P}) is reduced to the problem 
\begin{equation}\label{P2}
\left\{
\begin{array}[c]{lll}
-\Delta u = \lambda f(u)&\mbox{in } \Omega,\\
u = 0 &\mbox{on }\partial \Omega,
\end{array}\right.
\end{equation}
where $f$ is a nonnegative continuous function. 

When $f$ is positive, it is known that its behavior at zero and/or infinity can get to play a crucial role in the existence question of solutions. 
This situation can be clearly observed for the choice $f(t) = t^p$, $p>1$, in star-shape domains, on which we know that there exist positive solutions in $C^2(\Omega)\cap C(\overline{\Omega})$ to (\ref{P2}) if and only if $ p < \frac{N+2}{N-2}$. For fairly general nonlinearities, and using different approaches, many other authors have also studied the influence of the behavior at zero and/or infinity of $f$  in the existence question, see for example \cite{Ambrosetti-Brezis-Cerami,Ambrosetti-Hess,DeFigueiredo-Gossez-Ubilla,DeFigueiredo-Lions-Nussbaum} among other research papers.

When $f$ is nonnegative and has one zero, the existence issue of solutions to (\ref{P2}) is different, as was noted in \cite{Lions}. This study  has been extended to problems where the Laplacian was replaced by the $p$-Laplacian \cite{Iturriaga-Lorca-Massa, Iturriaga-Massa-Sanchez-Ubilla} or Pucci's operators \cite{Alarcon-Iturriaga-Quaas, Xiaohui}. 
In all these works was shown that there exist two positive solutions for sufficiently large $\lambda$ and by assuming some additional conditions on $f$. 
We also know two interesting contributions which have arisen recently. Firstly, by considering  $f$ nonnegative and having $r$ zeros, in \cite{GarciaMelian-Iturriaga}  was proved that (\ref{P2}) has $2r$ positive solutions provided only that $f$ verifies a suitable non-integrability condition near each of its zeros. Secondly, in \cite{Barrios-GarciaMelian-Iturriaga} was proved existence of positive solutions independently of the behavior of $f$ near zero or infinity, for $f$ Lipschitz having an isolated positive zero and verifying an additional growth hypothesis around such zero. 

In the case $0<s<1$, the existence of nonnegative solutions to (\ref{P}) also has been studied. Indeed, 
such as the local problem, the behavior at zero and/or infinity of $f$ also can get to exert an influence on the existence of nonnegative solutions. Again the choice $f(t) = t^p$, $p>1$, in star-shape domains, leads to nonexistence of bounded positive solutions, see \cite{RosOton-Serra}. We also know some recent research involving the fractional Laplacian, where some nonlinearities were considered. Based on a variational principle, in \cite{Kouhestani-Mahyar-Moameni} was proved multiplicity result when the nonlinearity leads to the well known convex-concave problem; whereas in  \cite{Biswas-Lorinczi} were considered nonlinearities exhibiting semi-linear and super-linear growth and  by using analytic and probabilistic tools, some Ambrosetti-Prodi type results were stablished. However, as far as our knowledge is concerned, there is no results in literature that addresses nonlinearities with zeros when $0 <s <1$.
Therefore, this is an interesting topic to investigate, which is the main objective of this paper.\medskip

To put into perspective our result, throughout this paper we consider  $f\colon \mathbb{R} \to [0, \infty)$ being a continuous function that verifies the following conditions: 
\begin{itemize}
	\item[($F_1$)] $\displaystyle\lim_{t\to 0^+}\frac{f(t)}{|t|^{p-1}t}=1$ for some $1<p<\frac{N+2s}{N-2s}$,
	\item[($F_2$)] $f(1)=0$ and $f(t)>0$ for $t\in(0,1)\cup(1,2)$,
	\item[($F_3$)] there exists $M_0>0$ such that the map $t\mapsto f(t)+M_0t$ is increasing for $t\in \mathbb{R}$. 
\end{itemize}
Observe that a such function $f$ verifies $f(0)=0$ due to $(F_1)$. 

Our main result is the following.

\begin{theorem}\label{main-theorem0}
Let $f\colon \mathbb{R} \to [0,\infty)$ be a continuous function verifying $(F_1)-(F_3)$. Then, there exists $\bar{\lambda}>0$ such that for $\lambda>\bar{\lambda}$ the problem $(P_{\lambda})$ admits two nonnegative solutions $u_{\lambda}, v_{\lambda} \in X^s_0(\Omega)$. Moreover, $0\le u_\lambda < v_{\lambda}\le 1$.
\end{theorem}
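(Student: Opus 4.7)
The plan is to analyse the truncated energy
$I_\lambda(u) = \tfrac12\|u\|^2 - \lambda\int_\Omega \tilde F(u)\,dx$ on $X^s_0(\Omega)$, where $\|\cdot\|$ denotes the norm of $X^s_0(\Omega)$, $\tilde f(t) = f(t)\chi_{[0,1]}(t)$ and $\tilde F(t) = \int_0^t \tilde f$. Every critical point of $I_\lambda$ automatically takes values in $[0,1]$ and therefore solves $(P_\lambda)$: testing against $u^-$ and against $(u-1)^+$, together with the Stampacchia-type fractional inequality $(u(x)-u(y))(w(x)-w(y))\ge(w(x)-w(y))^2$ for $w=u^-$ or $w=(u-1)^+$ and $\tilde f\equiv 0$ outside $[0,1]$, forces these cut-offs to vanish. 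Thus I only need to produce two distinct nontrivial critical points of $I_\lambda$.

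\textbf{First solution via Mountain Pass.} From $(F_1)$ and the subcritical embedding $X^s_0(\Omega)\hookrightarrow L^{p+1}(\Omega)$, $\tilde F(t)\le C|t|^{p+1}$, giving $I_\lambda(u)\ge\tfrac12\|u\|^2-C\lambda\|u\|^{p+1}$, hence $I_\lambda\ge\alpha>0$ on a suitable sphere $\|u\|=\rho$. To produce $e\in X^s_0(\Omega)$ with $I_\lambda(e)<0$, pick $e\in C_c(\Omega)$ with $0\le e\le 1$ and $e\equiv 1$ on a compact $K\subset\Omega$ of positive measure; then $I_\lambda(e)\le\tfrac12\|e\|^2-\lambda F(1)|K|<0$ for $\lambda\ge\bar\lambda$ large, since $F(1)>0$ by $(F_2)$. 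Because $\tilde f$ is bounded, $I_\lambda$ is coercive, Palais-Smale sequences are bounded, and the compact embedding $X^s_0(\Omega)\hookrightarrow L^q(\Omega)$ ($q<2_s^*$) together with the boundedness of $\tilde f$ upgrades weak to strong convergence. The Mountain Pass Theorem then supplies a critical point $u_\lambda\ne 0$ with $I_\lambda(u_\lambda)=c_\lambda\ge\alpha>0$, and $0\le u_\lambda\le 1$ by the preliminary observation.

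\textbf{Second solution via sub- and super-solutions.} $u_\lambda$ is a sub-solution and $\bar u\equiv 1$ a super-solution of the truncated problem, since $(-\Delta)^s\bar u=0=\lambda\tilde f(1)$ in $\Omega$ and $\bar u\ge 0$ on $\mathbb R^N\setminus\Omega$. Hypothesis $(F_3)$ makes $t\mapsto\tilde f(t)+M_0 t$ nondecreasing, so the iteration $v_0=1$ and, for $n\ge 0$,
$(-\Delta)^s v_{n+1}+\lambda M_0 v_{n+1}=\lambda\bigl(\tilde f(v_n)+M_0 v_n\bigr)$ in $\Omega$ with $v_{n+1}=0$ on $\mathbb R^N\setminus\Omega$, is well-posed in $X^s_0(\Omega)$ and produces, by the weak maximum principle for $(-\Delta)^s+\lambda M_0 I$, a monotone decreasing sequence with $u_\lambda\le v_{n+1}\le v_n\le 1$. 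The limit $v_\lambda$ is then a solution of $(P_\lambda)$ with $u_\lambda\le v_\lambda\le 1$.

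\textbf{Distinctness — main obstacle.} The heart of the argument is ruling out $u_\lambda\equiv v_\lambda$, because the monotone iteration could \emph{a priori} collapse onto the sub-solution. I handle this by realising $v_\lambda$ equivalently as a minimiser of $I_\lambda$ over $\mathcal K_\lambda=\{u\in X^s_0(\Omega):u_\lambda\le u\le 1\}$ (existence by coercivity and weak lower semicontinuity). Using the competitor $\phi=u_\lambda+(1-u_\lambda)\eta_K\in\mathcal K_\lambda$, built from a cut-off $\eta_K\in C_c^\infty(\Omega)$ with $\eta_K\equiv 1$ on $K$, the gain $\lambda\int_K(F(1)-\tilde F(u_\lambda))\,dx$ dominates the quadratic cost $\tfrac12(\|\phi\|^2-\|u_\lambda\|^2)$ for $\lambda$ large, whence $I_\lambda(\phi)<c_\lambda=I_\lambda(u_\lambda)$ and $v_\lambda\ne u_\lambda$. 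A standard variational-inequality argument (the sub/super pair makes both obstacles inactive a.e.\ in $\Omega$) shows this constrained minimiser is an unconstrained critical point of $I_\lambda$. Finally, the strong maximum principle applied to $w=v_\lambda-u_\lambda\ge 0$, which satisfies $(-\Delta)^s w+\lambda M_0 w\ge 0$ in $\Omega$ by $(F_3)$, upgrades the ordering to $u_\lambda<v_\lambda$ pointwise in $\Omega$.
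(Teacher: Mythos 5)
Your setup for the first solution is clean and essentially correct: truncating with $\tilde f = f\chi_{[0,1]}$, observing that the Stampacchia-type tests against $u^-$ and $(u-1)^+$ force any critical point into $[0,1]$, and using that $\tilde f$ is bounded to get coercivity and an automatic Palais--Smale condition. This is a genuine, somewhat slicker variant of the paper's truncation (the paper instead replaces $f$ by $f_R(t) = (f(R)/R^p)(t^+)^p$ outside $[0,R]$, which keeps polynomial growth and the Ambrosetti--Rabinowitz structure alive). The weak comparison principle, the monotone iteration between sub- and super-solutions, and the strong maximum principle step at the end are all in line with what the paper does.

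The gap is in the distinctness argument, and it is a real one. You take $\underline u = u_\lambda$ (at the \emph{same} $\lambda$) as the sub-solution, and then claim that the constrained minimizer $v_\lambda$ of $I_\lambda$ over $\mathcal K_\lambda = [u_\lambda, 1]$ satisfies $I_\lambda(v_\lambda) < c_\lambda = I_\lambda(u_\lambda)$ because ``the gain $\lambda\int_K(F(1)-\tilde F(u_\lambda))$ dominates the quadratic cost.'' That estimate is asserted, not proved, and it does not follow from what you have established. Expanding the cost for $\phi = u_\lambda + (1-u_\lambda)\eta_K$ and using that $u_\lambda$ is a critical point gives
$$
I_\lambda(\phi)-I_\lambda(u_\lambda) = \tfrac12\|(1-u_\lambda)\eta_K\|^2 \;-\; \lambda\int_\Omega\!\int_{u_\lambda(x)}^{\phi(x)}\bigl(\tilde f(\tau)-\tilde f(u_\lambda(x))\bigr)\,d\tau\,dx,
$$
and the second term has no definite sign unless one knows that $u_\lambda(x)$ sits near $0$ (if $u_\lambda(x)$ is near the maximum of $\tilde f$ on $[0,1]$ the inner integral is negative), while the first term scales like $\|u_\lambda\|_{X_0^s}^2$, which with your truncation you only control by $\|u_\lambda\|^2\le C\lambda$. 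The root difficulty is that $\tilde f = f\chi_{[0,1]}$ destroys the Ambrosetti--Rabinowitz pinching $\alpha_0 t^p\le f_R(t)\le\alpha_1 t^p$ that the paper uses (its Lemma on $\|u_{\lambda,R}\|_{X_0^s}=O(c_{\lambda,R}^{1/2})$) precisely because $\tilde f(1)=0$ makes $\tilde F(t)/(\tilde f(t)\,t)$ unbounded near $t=1$; so $c_\lambda\to0$ no longer yields $\|u_\lambda\|_{X_0^s}\to 0$, and the subsequent Moser iteration cannot be run to conclude $\|u_\lambda\|_{L^\infty}\to 0$. Without that decay, neither the positivity of the ``gain'' nor its dominance over the ``cost'' is available, and the constrained minimizer could in principle coincide with $u_\lambda$.

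The paper sidesteps this entirely by a small but decisive change of viewpoint: it first proves, via the $f_R$-truncation plus Moser iteration, that $\|u_\lambda\|_{L^\infty}\le C\lambda^{-1/(p-1)}\to 0$, and then takes the sub-solution to be $u_{\lambda_0}$ at a \emph{fixed} $\lambda_0$ rather than $u_\lambda$ itself. The sub- and super-solution construction then yields $v_\lambda\ge u_{\lambda_0}$, a fixed floor independent of $\lambda$, while $u_\lambda$ sinks below $u_{\lambda_0}$ in $L^\infty$ for $\lambda$ large, so distinctness and the strict ordering are immediate. If you want to salvage your route, you would need to independently establish the $L^\infty$-decay of $u_\lambda$ (which your truncation does not give directly) or adopt the paper's device of a fixed sub-solution.
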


The main ideas behind of the proof relies on known arguments for solving this type of problems. However, this issue is non-trivial, by which we need to treat the problema under an appropriate approach that allows to apply such arguments and, in this way, to save several technical difficulties that arise in the nonlocal case.  

Specifically, we obtain the first solution $u _{\lambda}$ by truncating the nonlinearity and taking $\lambda$ large enough, where we have  assumed only the local condition at zero  ($F_1$).  The behavior of the norms, according to the parameter  $\lambda $ of the possible solutions, is based on the De Giorgi-Nash-Moser theory, precisely on Moser's iterative scheme. Such procedure was introduced in the mid 1950s and early 1960s, we refer to \cite{DeGiorgi,Moser}.   An important fact in the proof is that the functional energy associated with the truncated problem has the Mountain Pass (MP) geometry. Thus, we can control, in terms of the parameter $\lambda$, the MP level and the norm of the MP solution.

Respect to the second solution, we impose the conditions ($F_2$) and ($F_3$) on $f$, which widely known when applying the sub- and super-solution method in problems like (\ref{P}) with $s=1$. Here we show that this idea still remains hold by defining properly sub- and super-solutions in our nonlocal context, it is when $0<s<1$.

The document is organized as follows: in Section 2 we offer a brief review of the fractional spaces of Sobolev in the context of our problem and remember some useful results. In Section 3 we look for the first solution, while in Section 3 we find the second solution, which ends the proof of Theorem \ref{main-theorem0}

%

\setcounter{equation}{0}
\section{Functional framework and preliminaries}\label{section2}
%
In this section, we offer a brief review of the fractional Sobolev spaces in the context of our problem.  
Let $\mathscr{S}(\mathbb{R}^N)$ be the Schwartz space of rapidly decaying  smooth functions, {\it i.e.},
$$
\mathscr{S}(\mathbb{R}^N):=\Big\{ \vartheta\in C^{\infty}(\mathbb{R}^N)\,\vert\, \sup_{x\in\mathbb{R}^N}| x^{\alpha} D^{\beta}\vartheta(x)|<\infty\quad\mbox{for all }{\alpha},{\beta} \in \mathbb{N}^N_0\Big\}.
$$
Here, we are considering  the fractional Laplacian $(-\Delta)^s$, with $s\in(0,1)$, of a function $\vartheta \in \mathscr{S}(\mathbb{R}^N)$ defined in the {\it principal-value sense} as
\begin{align*}
(-\Delta)^s \vartheta(x) &:= c(N,s)\,\mbox{P.V.}\int_{\mathbb{R}^N} \frac{\vartheta(x)-\vartheta(y)}{|x-y|^{N+2s}} \, dy\\
&=-\frac{c(N,s)}{2}\int_{\mathbb{R}^n} \frac{\vartheta(x+z) - 2\vartheta(x) + \vartheta(x-z)}{|z|^{N+2s}}\, dz. 
\end{align*}
where $c(N,s):= \big( \int_{\mathbb{R^N}} \frac{1-\cos\zeta_1}{|\zeta|^{N+2s}}d\zeta \big)^{-1}$ is a normalization constant. 
We remark that the fractional Laplacian also can be viewed as a pseudo-differential operator of symbol $|\xi|^{2s}$ defined for any function $\vartheta$ in $\mathscr{S}(\mathbb{R}^N)$  as
$$
(-\Delta)^s \vartheta = \mathscr{F}^{-1}(|\xi|^{2s}\mathscr{F}(\vartheta)),
$$
where $\mathscr{F}$ denotes Fourier transform, {\it i.e.}, 
$$
\mathscr{F}(\vartheta)(\xi) = (2\pi)^{-\frac{N}{2}}\int_{\mathbb{R}^N} e^{-\mathrm{i}\xi \cdot x}\vartheta(x)\,dx \quad \mbox{for all }\xi\in\mathbb{R}^N,
$$
and $\mathscr{F}^{-1}$ its inverse, {\it i.e.}, 
$$
\mathscr{F}^{-1}(\widehat{\vartheta})(x) = (2\pi)^{-\frac{N}{2}}\int_{\mathbb{R}^N} e^{\mathrm{i} \xi\cdot x}\widehat{\vartheta}(\xi)\,d\xi \quad \mbox{for all }x\in\mathbb{R}^N,
$$
where $\widehat{\vartheta}:=\mathscr{F}(\vartheta)$, that verifies $\widehat{\vartheta}\in\mathscr{S}(\mathbb{R}^N)$.  See \cite[Proposition 3.3]{DiNezza-Palatucci-Valdinoci}.
	
We consider now the fractional Sobolev space $H^s(\Omega)$ defined as 
$$
H^s(\Omega)=\left\{h\in L^2(\Omega)\,\vert\, \frac{|h(x)-h(y)|}{|x-y|^\frac{N+2s}{2}}\in L^2(\Omega\times\Omega)\right\}
$$
endowed with the norm
$$
\|h\|_{H^s(\Omega)}:=\left(\int_{\Omega}|h(x)|^2\,dx+\int_{\Omega\times \Omega}\frac{|h(x)-h(y)|^2}{|x-y|^{N+2s}}\,dx\,dy\right)^\frac{1}{2},
$$
where the term 
$$
[h]_{H^s(\Omega)}:=\left(\int_{\Omega\times\Omega}\frac{|h(x)-h(y)|^2}{|x-y|^{N+2s}}\,dx\,dy\right)^\frac{1}{2},
$$
is the so-called Gagliardo seminorm of $h$. 
Also we consider the space $H_0^s(\Omega)$ which denotes the closure of $C_0^\infty(\Omega)$ with respect to the norm $\|\cdot\|_{H^s(\Omega)}$.

Since we are considering homogeneous Dirichlet boundary conditions, we need to work in a suitable functional analytical setting in order to correctly encode the Dirichlet datum in the variational formulation. 
In this way, it is convenient to introduce  the set 
$$
Q:=(\mathbb{R}^N\times\mathbb{R}^N)\setminus(\Omega^c\times\Omega^c)
$$
where $$\Omega^c:=\mathbb{R}^N\setminus\Omega,$$and consider the function $K:\mathbb{R}^N\setminus\{0\}\to(0,\infty)$ that is defined as 
$$
K(x):=|x|^{-(N+2s)}.
$$
We also consider the set $X^s(\Omega)$ being the linear space of all Lebesgue measurable functions from $\mathbb{R}^N$ to $\mathbb{R}$ such that the restriction to $\Omega$ of any function $u$ belongs to $L^2(\Omega)$, and the map $(x,y)\mapsto(u(x)-u(y))\sqrt{K(x-y)}$ belongs to  $L^2(Q)$. 
Here we consider to the space $X^s(\Omega)$ endowed with the norm 
$$
\|u\|_{X^s(\Omega)}=\|u\|_{L^2(\Omega)}+\left(\int_Q |u(x)-u(y)|^2K(x-y)\,dx\,dy\right)^\frac{1}{2}.
$$ 
We remark that $\|\cdot\|_{H_0^s(\Omega)}$ and $\|\cdot\|_{X^s(\Omega)}$ are not the same space because $\Omega\times\Omega$ is strictly contained in $Q$, so that this makes the classical fractional Sobolev space approach not sufficient for studying the nonlocal problem.
	
Now, we introduce the space
$$
X_0^s(\Omega):=\{u\in X^s(\Omega):\,u=0\mbox{ a.e. in }\Omega^c\}
$$
endowed with the norm induced by the norm of $X^s(\Omega)$; that is, 
$$
\|u\|_{X^s_0(\Omega)}:=\|u\|_{L^2(\mathbb{R}^N)}+\left(\int_{\mathbb{R}^N\times\mathbb{R}^N} |u(x)-u(y)|^2K(x-y)\,dx\,dy\right)^\frac{1}{2}.
$$
Note that  $\|\cdot \|_{X^s_0(\Omega)}$ is truly a norm in $X_0^s(\Omega)$, since for every $u\in X_0^s(\Omega)$, one has that  $u=0$ a.e. in $\Omega^c$.
	
Before continuous we recall some properties of the fractional Sobolev space that we will use in next sections (see for example \cite{Molica-Radulescu-Servadei}).

\begin{lemma}\label{lemma-inclusion}
Let $s\in(0,1)$, $N>2s$, and $\Omega$ an open bounded domain in $\mathbb{R}^N$ with $C^{0,1}$-boundary. 
Then, the following assertions hold:
\begin{itemize}
\item[(i)] There exists a positive constant $C$, depending only on $N$ and $s$, such that for any $u\in X_0^s(\Omega)$,
$$
\|u\|_{L^{2^*_s}(\Omega)}^2=\|u\|_{L^{2^*_s}(\mathbb{R}^N)}^2\leq C\int_{\mathbb{R}^N\times\mathbb{R}^N} |u(x)-u(y)|^2K(x-y)\,dx\,dy.
$$ 	
\item[(ii)] There exists a constant $C>1$, depending only on $N$, $s$ and $\Omega$, such that for any $u\in X_0^s(\Omega)$,
\begin{align*}
\int_Q |u(x)-u(y)|^2K(x-y)\,dx\,dy&\leq \|u\|_{X^s(\Omega)}^2\\ &\leq C\int_Q |u(x)-u(y)|^2K(x-y)\,dx\,dy,
\end{align*}
that is, 
$$
\|u\|_{X^s_0}(\Omega)=\left(\int_{\mathbb{R}^N\times \mathbb{R}^N} |u(x)-u(y)|^2K(x-y)\,dx\,dy\right)^{\frac12}
$$
is a norm in $X_0^s(\Omega)$ equivalent to the usual norm. 
\item[(iii)]  The embedding $X_0^s(\Omega)\hookrightarrow L^r(\Omega)$ is continuous for any $r\in[1, 2^*_s]$, and compact for any $r\in[1, 2^*_s)$.
\end{itemize}	
\end{lemma}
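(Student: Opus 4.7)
The plan is to reduce everything to well-known facts about the Gagliardo seminorm on $H^s(\mathbb{R}^N)$, exploiting the crucial observation that functions in $X_0^s(\Omega)$ vanish outside $\Omega$. Specifically, the first step I would carry out is an \emph{extension identity}: for any $u\in X_0^s(\Omega)$, since $u\equiv 0$ a.e.\ on $\Omega^c$, the integrand $|u(x)-u(y)|^2K(x-y)$ is zero on $\Omega^c\times\Omega^c$, so
\begin{equation*}
\int_Q |u(x)-u(y)|^2K(x-y)\,dx\,dy \;=\; \int_{\mathbb{R}^N\times\mathbb{R}^N}|u(x)-u(y)|^2K(x-y)\,dx\,dy.
\end{equation*}
This makes the full $\mathbb{R}^N$-Gagliardo machinery available on $X_0^s(\Omega)$.

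For part (i), I would invoke the classical fractional Sobolev inequality on $\mathbb{R}^N$, obtained either through the Fourier-side identity relating the Gagliardo seminorm to $\|(-\Delta)^{s/2}u\|_{L^2}^2$ and then Hardy–Littlewood–Sobolev, or through the Aronszajn–Slobodeckij inequality. Combining this with the extension identity yields the inequality of (i) directly.

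For part (ii), the lower bound is trivial since $\|u\|_{X^s(\Omega)}^2 \ge [u]_Q^2$ because $\|u\|_{X^s(\Omega)}=\|u\|_{L^2(\Omega)}+[u]_Q$ and $(a+b)^2\ge b^2$. For the upper bound it suffices to prove the Poincar\'e-type inequality $\|u\|_{L^2(\Omega)}\le C[u]_Q$ for $u\in X_0^s(\Omega)$. This follows by H\"older on the bounded domain $\Omega$, $\|u\|_{L^2(\Omega)}\le|\Omega|^{\,s/N}\,\|u\|_{L^{2^*_s}(\Omega)}$, and then applying (i). Squaring $(a+b)^2\le 2(a^2+b^2)\le C b^2$ finishes the claim.

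For part (iii), I would use the Fr\'echet–Kolmogorov compactness criterion. Given a bounded sequence $(u_n)\subset X_0^s(\Omega)$, the zero-extensions $\tilde u_n$ are uniformly supported in a neighborhood of $\overline\Omega$, yielding tightness for free. Equicontinuity in $L^2$ follows from the standard translation estimate
\begin{equation*}
\int_{\mathbb{R}^N}|\tilde u_n(x+h)-\tilde u_n(x)|^2\,dx \;\le\; C|h|^{2s}\,[\tilde u_n]^2_{H^s(\mathbb{R}^N)},
\end{equation*}
which one proves by a change of variables in the Gagliardo integral after writing $|h|^{2s}$ as a weight. This gives precompactness in $L^2(\Omega)$; interpolating between $L^2(\Omega)$ and the bound in $L^{2^*_s}(\Omega)$ from (i) extends the compactness to every $r\in[1,2^*_s)$, while continuous embedding into $L^{2^*_s}(\Omega)$ is just (i) combined with H\"older. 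The main obstacle, if any, is the Poincar\'e step in (ii): one must resist the temptation to bound $\|u\|_{L^2(\Omega)}$ by $[u]_{H^s(\Omega)}$ (the seminorm over $\Omega\times\Omega$), which would be false without boundary control, and instead use the extension identity to rewrite $[u]_Q$ as the full $\mathbb{R}^N$-seminorm before applying the Sobolev inequality of (i).
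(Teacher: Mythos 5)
Your argument is correct, and your estimates check out. Be aware, however, that the paper supplies no proof of this lemma: it is recalled as a standard fact with a pointer to the monograph \cite{Molica-Radulescu-Servadei}, so there is nothing to compare against. The route you sketch is the canonical one for establishing those facts. The observation that $u\equiv 0$ a.e.\ on $\Omega^c$ makes the $Q$-integral coincide with the full $\mathbb{R}^N\times\mathbb{R}^N$-integral is exactly the right starting point, and you are right to flag that one must not try to close the Poincar\'e step with the seminorm over $\Omega\times\Omega$ alone. The exponent in your H\"older step is correct, $\tfrac12-\tfrac{1}{2^*_s}=\tfrac{s}{N}$, and together with $(a+b)^2\le 2(a^2+b^2)$ it closes (ii). For (iii), the translation estimate $\|\tilde u(\cdot+h)-\tilde u\|_{L^2(\mathbb{R}^N)}^2\le C|h|^{2s}[\tilde u]^2_{H^s(\mathbb{R}^N)}$ is most transparently obtained via Plancherel from $|e^{\mathrm{i}h\cdot\xi}-1|\le 2^{1-s}(|h||\xi|)^s$; the ``change of variables in the Gagliardo integral'' phrasing you give hides an averaging-over-intermediate-translations trick that you should spell out if you intend a real-variable proof. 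One minor simplification: for $r\in[1,2)$ compactness already follows from $L^2(\Omega)$-compactness and H\"older on the bounded domain, so interpolation with $L^{2^*_s}$ is only needed for $r\in(2,2^*_s)$.
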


For convenience, from now on we consider the space $X_0^s(\Omega)$ endowed with the equivalent norm
$$
\|u\|_{X_0^s(\Omega)} := \left( \frac{1}{2} \int_{\mathbb{R}^N\times \mathbb{R}^N}|u(x)-u(y)|^2 K(x-y)\, dx\,dy    \right)^{\frac{1}{2}}.
$$

Finally, we give the notion of weak solution that we use throughout this paper. We say that $u\in X^s_0(\Omega)$ is a weak solution to $(\ref{P})$ if 
$$
\frac{1}{2} \int_{\mathbb{R}^N\times \mathbb{R}^N}(u(x)-u(y))(v(x)-v(y))K(x-y)\, dx\,dy = \lambda \int_{\Omega} f(u) v \, dx 
$$
for every $v\in X^s_0(\Omega)$.
%

\setcounter{equation}{0}
\section{The first solution}\label{section3} 

Let $f: \mathbb{R} \rightarrow [0,\infty)$ be a continuous function. 
Throughout this section, we only assume hypothesis $(F_1)$ on $f$. 
The principal outcome here is the next theorem.

\begin{theorem}\label{main}
Let $f: \mathbb{R} \rightarrow [0,\infty)$ be a continuous function such that $(F_1)$ is verified. 
Then, there exists $\lambda^*>0$ such that for any $\lambda>\lambda^*$ the problem (\ref{P}) admits a nonnegative solution $u_\lambda$. 
Moreover, 
$$
\lim_{\lambda \to +\infty}\|u_\lambda\|_{L^\infty(\Omega)}= 0.
$$
\end{theorem}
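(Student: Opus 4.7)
The strategy is standard for problems of this kind but requires care in the nonlocal setting: (i) truncate $f$ to a nonlinearity $g$ with controlled global $p$-power behaviour; (ii) produce a Mountain Pass (MP) critical point $u_\lambda$ of the truncated energy; (iii) show that both the MP level and the $X_0^s(\Omega)$-norm of $u_\lambda$ decay as powers of $\lambda$; and (iv) upgrade this decay to $L^\infty$ via Moser iteration applied to a rescaled function, so that for $\lambda$ large $u_\lambda$ stays in the range where $g\equiv f$ and hence solves $(P_\lambda)$.

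For the truncation, given a small $\epsilon>0$ pick $\delta=\delta(\epsilon)>0$ by $(F_1)$ with $(1-\epsilon)t^p\le f(t)\le(1+\epsilon)t^p$ on $[0,\delta]$, and set $g(t)=0$ for $t\le 0$, $g(t)=f(t)$ on $[0,\delta]$, and $g(t)=(f(\delta)/\delta^p)\,t^p$ for $t\ge\delta$. Then $g$ is continuous and nonnegative, satisfies $g(t)\le (1+\epsilon)t_+^p$, $G(t):=\int_0^t g\ge \tfrac{1-\epsilon}{p+1}t_+^{p+1}$, and obeys the Ambrosetti--Rabinowitz tail $(p+1)G(t)\le t g(t)$ for $t\ge\delta$. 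The energy
\[
I_\lambda(u)=\tfrac{1}{2}\|u\|_{X_0^s(\Omega)}^2-\lambda\int_\Omega G(u)\,dx
\]
is $C^1$ on $X_0^s(\Omega)$ by Lemma \ref{lemma-inclusion}, has MP geometry (a positive lower bound on a small sphere, coming from $G(u)\le C|u|^{p+1}$ and the Sobolev embedding, plus a nonnegative ray $t\phi$, $\phi\in C_c^\infty(\Omega)\setminus\{0\}$, along which $I_\lambda\to-\infty$ by superlinearity), and verifies the Palais--Smale condition (boundedness from the AR tail of $g$ plus strong convergence via the compact embedding $X_0^s(\Omega)\hookrightarrow L^{p+1}(\Omega)$). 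The Mountain Pass Theorem then yields a critical point $u_\lambda$ at level $c_\lambda>0$; testing with $u_\lambda^-$ and using the pointwise inequality $(u(x)-u(y))(u^-(x)-u^-(y))\le-(u^-(x)-u^-(y))^2$ together with $g(u_\lambda)u_\lambda^-\equiv 0$ forces $u_\lambda\ge 0$.

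The quantitative heart is the bound on $c_\lambda$. Fixing a nonnegative $\phi\in C_c^\infty(\Omega)$ with $\|\phi\|_\infty\le 1$, the lower bound on $G$ gives
\[
I_\lambda(t\phi)\le \frac{t^2}{2}\|\phi\|_{X_0^s(\Omega)}^2-\frac{(1-\epsilon)\lambda}{p+1}\,t^{p+1}\int_\Omega\phi^{p+1}\,dx,
\]
whose maximum in $t\ge 0$ equals $O(\lambda^{-2/(p-1)})$ and is attained at $t_\lambda=O(\lambda^{-1/(p-1)})$. Choosing $T=T_\lambda$ of the same order so that $I_\lambda(T\phi)<0$ and $\|T\phi\|_{X_0^s(\Omega)}$ exceeds the MP radius $\rho_\lambda\sim\lambda^{-1/(p-1)}$ (both conditions compatible for $\lambda$ large), the test path $t\mapsto tT\phi$ yields $c_\lambda\le C\lambda^{-2/(p-1)}$. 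Combining $g(t)t\le (1+\epsilon)t^{p+1}$ with $G(t)\ge\tfrac{1-\epsilon}{p+1}t^{p+1}$ in the identities $I_\lambda(u_\lambda)=c_\lambda$ and $\langle I_\lambda'(u_\lambda),u_\lambda\rangle=0$ produces $c_\lambda\ge c(p,\epsilon)\|u_\lambda\|_{X_0^s(\Omega)}^2$ with $c(p,\epsilon)>0$ because $p+1>2$; hence $\|u_\lambda\|_{X_0^s(\Omega)}\le C\lambda^{-1/(p-1)}$ and, through Lemma \ref{lemma-inclusion}, $\|u_\lambda\|_{L^{2^*_s}(\Omega)}\le C\lambda^{-1/(p-1)}$.

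To pass from this $X_0^s$-bound to $\|u_\lambda\|_{L^\infty}\to 0$, rescale by setting $v_\lambda:=\lambda^{1/(p-1)}u_\lambda$, which satisfies the $\lambda$-free inequality
\[
(-\Delta)^s v_\lambda = \lambda^{p/(p-1)}g\bigl(\lambda^{-1/(p-1)}v_\lambda\bigr)\le (1+\epsilon)\,v_\lambda^p\quad\text{in }\Omega,
\]
with $\|v_\lambda\|_{X_0^s(\Omega)}$ uniformly bounded in $\lambda$. Since $p<2^*_s-1$, Moser's iterative scheme applied to the nonnegative subsolution $v_\lambda$ (Stroock--Varopoulos-type estimate combined with repeated use of the fractional Sobolev inequality from Lemma \ref{lemma-inclusion}) produces $\|v_\lambda\|_{L^\infty(\Omega)}\le C$ with $C$ independent of $\lambda$, hence $\|u_\lambda\|_{L^\infty(\Omega)}\le C\lambda^{-1/(p-1)}\to 0$. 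For $\lambda>\lambda^*$ large enough this forces $\|u_\lambda\|_{L^\infty(\Omega)}\le\delta$, so $g(u_\lambda)=f(u_\lambda)$ and $u_\lambda$ is a weak solution to the original problem. The most delicate point is the Moser step: without the rescaling, the iteration constants inherit a factor of $\lambda$ from the equation and the decay of $\|u_\lambda\|_{L^{2^*_s}}$ would not obviously dominate; the rescaling trick is what removes $\lambda$ from the differential inequality and makes the iteration quantitative and uniform.
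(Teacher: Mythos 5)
Your proposal follows essentially the same route as the paper: truncate $f$ to a nonlinearity that is an exact $p$-power for large arguments, obtain a nonnegative Mountain Pass critical point of the truncated energy, show the MP level scales like $\lambda^{-2/(p-1)}$ and hence $\|u_\lambda\|_{X_0^s(\Omega)}=O(\lambda^{-1/(p-1)})$, and conclude via Moser iteration that $\|u_\lambda\|_{L^\infty(\Omega)}\le C\lambda^{-1/(p-1)}\to 0$, so that for large $\lambda$ the solution stays in the region where the truncation is inactive. The one genuine variation is in the Moser step: you rescale to $v_\lambda=\lambda^{1/(p-1)}u_\lambda$, which satisfies the $\lambda$-free differential inequality $(-\Delta)^s v_\lambda\le(1+\epsilon)v_\lambda^p$ with uniformly bounded $X_0^s$ norm, whereas the paper iterates directly on $u_{\lambda,R}$ and tracks the explicit $\lambda$-dependence of the iteration constants, arriving at the bound $\|u\|_{L^\infty(\Omega)}\le\lambda^{1/(2_s^*-p-1)}L\|u\|_{L^{2_s^*}(\Omega)}^{(2_s^*-2)/(2_s^*-p-1)}$; unwinding the scaling shows the two bounds are identical, but your rescaled form makes the $\lambda$-independence of the Moser constant transparent without having to carry $\lambda$ through the induction.

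One small imprecision: your claim that the truncated $g$ obeys the Ambrosetti--Rabinowitz inequality with exponent $p+1$ on $t\ge\delta$ is not guaranteed. A direct computation gives $tg(t)-(p+1)G(t)=\delta f(\delta)-(p+1)G(\delta)$ for $t\ge\delta$, and under $(F_1)$ one only has $\delta f(\delta)\ge(1-\epsilon)\delta^{p+1}$ while $(p+1)G(\delta)\le(1+\epsilon)\delta^{p+1}$, so the sign could go either way. This does not break the argument, since the two-sided power bounds $(1-\epsilon)t_+^p\le g(t)\le(1+\epsilon)t_+^p$ already give a global AR inequality with some exponent $\mu\in(2,p+1)$ for $\epsilon$ small (because $(1-\epsilon)(p+1)/(1+\epsilon)>2$), which suffices both for Palais--Smale and for the inequality $c_\lambda\ge c(p,\epsilon)\|u_\lambda\|_{X_0^s(\Omega)}^2$; this is also how the paper proceeds, with the condition $(p+1)\alpha_0>4\alpha_1$. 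Just state the AR inequality with a strictly smaller exponent, or argue directly from the pure-power tail.
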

To prove Theorem \ref{main}, we first consider an auxiliary problem. 
Since we are looking for solutions $u_{\lambda}$ close to zero in the $L^{\infty}$-norm, we can truncate the problem (\ref{P}) as follows. 

For $R\in(0,1)$,  we consider the truncate problem
\begin{equation}\label{PLR}
\left\{\begin{array}[c]{lll}
(-\Delta)^s u = \lambda f_R(u) &\mbox{in }\Omega, \\
u=0&\mbox{on } \Omega^c,
\end{array}\right.
\end{equation}
where
$$
f_R(t)=
\left\{
\begin{array}[c]{lll}
f(t^+) &\mbox{ if }|t|\leq R,\smallskip\\ 
\displaystyle \frac{f(R)}{R^p}(t^+)^p &\mbox{ otherwise},
\end{array}
\right.
$$
and $t^+=\max\{0, t \}$. 
Note now that problem (\ref{PLR}) has a variational structure. 
Indeed, its weak formulation  is given by 
$$
\!\!\!\!\!\!\!\!\!\!\left\{\!\!\!
\begin{array}[c]{lll}
\displaystyle
\frac{1}{2}\!\int_{\mathbb{R}^N\times\mathbb{R}^N}\! \!(u(x)\!-\!u(y))(v(x)\!-\!v(y))K(x\!-\!y)dx\,dy\!=\!\lambda\!\int_{\Omega}\!f_R(u)v(x)
\,dx\quad\!\forall v\!\in\! X_0^s(\Omega),\smallskip\\ 
u\in X_0^s(\Omega),
\end{array}\!\!\!\!\!
\right.\!\!\!\!\!
$$
and its associated energy functional $J_{R,\lambda}:X_0^s(\Omega)\rightarrow \mathbb{R}$ is given by
\begin{equation}\label{J-lambda-R}
J_{R,\lambda}(u)=\frac{1}{4}\int_{\mathbb{R}^N\times\mathbb{R}^N} |u(x)-u(y)|^2 K(x-y)\,dx\,dy-\lambda\int_{\Omega}F_R(u(x))\,dx,
\end{equation}
where $F_R$ is the primitive of $f_R$, that is, $F_R(t)=\int_0^t f_R(\tau)\,d\tau$. 

Next lemma shows that if (\ref{PLR}) has a solution, then it is nonnegative.

\begin{lemma} Let $R \in (0, 1)$.
Assume that problem (\ref{PLR}) admits a solution $u_{\lambda,R}\in X_0^s(\Omega)$, then $u_{\lambda,R}\ge 0$ a.e. in $\Omega$.
\end{lemma}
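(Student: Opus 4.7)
The strategy is to use the negative part $u_{\lambda,R}^- := \max\{-u_{\lambda,R}, 0\}$ as a test function in the weak formulation of \eqref{PLR}. The idea is standard: on one hand, the structure of $f_R$ forces the right-hand side to vanish; on the other hand, the nonlocal bilinear form tested against $u^-$ controls $\|u^-\|_{X_0^s(\Omega)}^2$ from above by a nonpositive quantity.

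First I would check that $u^- := u_{\lambda,R}^-$ is an admissible test function. Since $u_{\lambda,R} \in X_0^s(\Omega)$, in particular $u_{\lambda,R}=0$ a.e.\ in $\Omega^c$, hence $u^-=0$ a.e.\ in $\Omega^c$; and from the pointwise bound $|u^-(x)-u^-(y)|\le |u(x)-u(y)|$ one gets $u^-\in X_0^s(\Omega)$. Next, observe that $f_R$ depends on its argument only through its positive part, and since $(F_1)$ forces $f(0)=0$, the function $f_R$ vanishes on $(-\infty,0]$. Consequently $f_R(u_{\lambda,R}(x))\,u^-(x)=0$ pointwise a.e.\ in $\Omega$, so the right-hand side of the weak formulation with test function $u^-$ equals $0$.

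For the left-hand side, I would use the decomposition $u=u^+-u^-$ to write, pointwise,
\[
(u(x)-u(y))(u^-(x)-u^-(y)) = (u^+(x)-u^+(y))(u^-(x)-u^-(y)) - (u^-(x)-u^-(y))^2.
\]
A short case analysis on the signs of $u(x), u(y)$ (using that at each point at most one of $u^+, u^-$ is nonzero) shows that the cross term satisfies $(u^+(x)-u^+(y))(u^-(x)-u^-(y))\le 0$ for all $x,y\in\mathbb{R}^N$. Multiplying by the positive kernel $K(x-y)$ and integrating over $\mathbb{R}^N\times\mathbb{R}^N$, the identity above combined with the vanishing of the right-hand side yields
\[
\|u^-\|_{X_0^s(\Omega)}^2 \;=\; \frac{1}{2}\int_{\mathbb{R}^N\times\mathbb{R}^N}(u^+(x)-u^+(y))(u^-(x)-u^-(y))K(x-y)\,dx\,dy \;\le\; 0.
\]
Hence $u^- \equiv 0$ a.e., i.e., $u_{\lambda,R}\ge 0$ a.e.\ in $\Omega$ (and actually in all of $\mathbb{R}^N$).

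The only genuinely nonlocal subtlety is the pointwise sign of the cross term under the double integral; once this is settled by the case analysis, everything else is essentially bookkeeping with the definition of $X_0^s(\Omega)$ and the fact that $f_R$ was deliberately defined using $t^+$ precisely to make the right-hand side vanish when tested against $u^-$. I would expect this sign lemma for $u^+$ and $u^-$ in the Gagliardo-type double integral to be the one place where I have to be careful, since in the local setting the analogous fact is the trivial identity $\nabla u^+ \cdot \nabla u^- = 0$ a.e., whereas here it becomes an inequality rather than an equality.
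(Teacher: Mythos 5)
Your proof is correct and follows essentially the same approach as the paper: test the weak formulation against $u_{\lambda,R}^-$, observe that $f_R(u_{\lambda,R})\,u_{\lambda,R}^-=0$ pointwise by the definition of $f_R$ via $t^+$, and decompose the nonlocal bilinear form to isolate $-\|u_{\lambda,R}^-\|_{X_0^s(\Omega)}^2$ plus a nonpositive cross term. The only cosmetic difference is that the paper fully expands the cross term as $-(v^+(y)v^-(x)+v^+(x)v^-(y))$, whereas you leave it as $(u^+(x)-u^+(y))(u^-(x)-u^-(y))$ and argue its sign by cases; these are the same quantity and both give the conclusion.
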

\begin{proof}
If $u_{\lambda,R}\in X_0^s(\Omega)$, then $u_{\lambda,R}\in X^s(\Omega)$ and $u_{\lambda,R}=0$ a.e. $\Omega^c$. To simplify notation, let us denote $v= u_{\lambda,R}$. Then $v^+, v^- \in X^s_0(\Omega)$. Taking $v^-$ as a test function in the weak formulation of (\ref{PLR}) and, since
$$
\begin{array}[c]{lll}
(v(x)-v(y))(v^-(x)-v^-(y))\smallskip\\
\qquad =(v^+(x)-v^-(x)-v^+(y)+v^-(y))(v^-(x)-v^-(y))\smallskip\\ 
\qquad =-(v^-(x))^2-v^+(y)v^-(x)+v^-(y)v^-(x)\smallskip\\
\qquad \qquad -v^+(x)v^-(y) +v^-(x)v^-(y)-(v^-(y))^2\smallskip\\
\qquad =-(v^-(x)-v^-(y))^2-(v^+(y)v^-(x)+v^+(x)v^-(y)),
\end{array}
$$
then we get
\begin{align*}
0&=\int_{\Omega}f_R(-v^-(x))v^-(x)\,dx\\ &=\int_{\Omega}f_R(v(x))v^-(x)\,dx\\  &=\int_{\mathbb{R}^N\times\mathbb{R}^N} (v(x)-v(y))(v^-(x)-v^-(y))K(x-y)\,dx\,dy\\ &=-\int_{\mathbb{R}^N\times\mathbb{R}^N} (v^-(x)-v^-(y))^2K(x-y)\,dx\,dy\\ &\qquad-\int_{\mathbb{R}^N\times\mathbb{R}^N} (v^+(y)v^-(x)+v^+(x)v^-(y))K(x-y)\,dx\,dy,
\end{align*} 
which implies that $\|v^-\|_{X_0^s(\Omega)}=0$, because $v^+(y)v^-(x)+v^+(x)v^-(y)\geq 0$ for a.e. $(x,y)\in\mathbb{R}^N\times\mathbb{R}^N$, that leads to $v\geq 0$ a.e. in $\Omega$.
\end{proof}

In the sequel, we verify that the energy functional $J_{R,\lambda}$ given by \eqref{J-lambda-R} has the Mountain Pass geometry for all $R$ sufficiently small. 

\begin{lemma}\label{lema-geometria1}
Let $\lambda>0$, and $J_{\lambda,R}$ be given by \eqref{J-lambda-R}.  Then, for each sufficiently small $R$, there exist positive numbers $\rho_\lambda$ and $\beta_\lambda$ such that
\begin{itemize}
\item[(i)] $J_{R,\lambda}(u)\geq\beta_\lambda$ for any $\|u\|_{X_0^s(\Omega)}=\rho_\lambda$. Moreover, 
$$
\lim_{\lambda \to +\infty}\rho_\lambda = 0 = \lim_{\lambda \to +\infty} \beta_{\lambda}.
$$ 
\item[(ii)] There exists a function $e\in X_0^s(\Omega)$ such that $\|e\|_{X_0^s(\Omega)}>\rho_\lambda$ and $J_{R,\lambda}(e)<0$.
\end{itemize}   
\end{lemma}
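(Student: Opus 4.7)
The plan is to control $F_R$ from above by a pure $(p+1)$-power near zero using $(F_1)$, and from below by the same power above the truncation level using the explicit definition of $f_R$; combined with the Sobolev embedding this yields the competition between a quadratic term and a superquadratic term needed for mountain-pass geometry.

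For part (i), I would fix $\varepsilon>0$ and use $(F_1)$ to pick $\delta>0$ with $f(t)\le(1+\varepsilon)t^p$ on $[0,\delta]$. For any $R\in(0,\delta]$ the truncation then satisfies $f_R(t)\le(1+\varepsilon)(t^+)^p$ on all of $\mathbb{R}$: on $|t|\le R$ this is the bound on $f$ itself, and on $t>R$ it reduces to $f(R)/R^p\le 1+\varepsilon$. Integrating gives $F_R(t)\le\frac{1+\varepsilon}{p+1}(t^+)^{p+1}$. Since $2<p+1<2^*_s$, Lemma \ref{lemma-inclusion}(iii) provides a constant $C>0$, independent of $\lambda$, such that
\[
J_{R,\lambda}(u)\;\ge\;\tfrac12\|u\|_{X_0^s(\Omega)}^2-A\lambda\|u\|_{X_0^s(\Omega)}^{p+1}
\]
for some $A>0$. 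Viewing the right-hand side as $h_\lambda(r)=\tfrac12 r^2-A\lambda r^{p+1}$, its unique positive critical point is $r_\lambda=\bigl((p+1)A\lambda\bigr)^{-1/(p-1)}$. Setting $\rho_\lambda:=r_\lambda$ and $\beta_\lambda:=h_\lambda(\rho_\lambda)>0$ would prove (i), with $\rho_\lambda\sim\lambda^{-1/(p-1)}$ and $\beta_\lambda\sim\lambda^{-2/(p-1)}$, both vanishing as $\lambda\to\infty$.

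For part (ii), I would fix any nontrivial nonnegative $u_0\in X_0^s(\Omega)$. A direct piecewise integration of the definition of $f_R$ yields
\[
F_R(\tau)\;\ge\;\frac{f(R)}{(p+1)R^p}\tau^{p+1}-C_R,\qquad\tau\ge 0,
\]
with $C_R=\tfrac{f(R)R}{p+1}$, and $f(R)>0$ for $R$ small by $(F_1)$. Hence
\[
J_{R,\lambda}(tu_0)\;\le\;\frac{t^2}{2}\|u_0\|_{X_0^s(\Omega)}^2-\frac{\lambda f(R)\,t^{p+1}}{(p+1)R^p}\int_\Omega u_0^{p+1}\,dx+\lambda C_R|\Omega|,
\]
which tends to $-\infty$ as $t\to\infty$ because $p+1>2$. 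Picking $t_\lambda$ large enough makes $e:=t_\lambda u_0$ satisfy both $\|e\|_{X_0^s(\Omega)}>\rho_\lambda$ and $J_{R,\lambda}(e)<0$.

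The main obstacle I anticipate is the bookkeeping on parameters: one must fix $R$ (and $\varepsilon$) once and for all before introducing $\lambda$, so that the constant $A$ in (i) is genuinely $\lambda$-independent and the explicit decay rate of $\rho_\lambda,\beta_\lambda$ can be read off cleanly. Once that is arranged, (i) is a standard mountain-pass estimate and (ii) is a routine subcritical blow-down along a fixed ray.
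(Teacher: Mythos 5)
Your proposal is correct and follows essentially the same route as the paper: bound $F_R$ above by a $(p+1)$-power via $(F_1)$, invoke the Sobolev embedding to get the mountain-pass estimate and compute the vanishing radius and level, then use the built-in $(p+1)$-power tail of $f_R$ to blow the functional down along a ray for part (ii). The only difference is cosmetic — you spell out the $\varepsilon$--$\delta$ bookkeeping and the explicit lower bound $F_R(\tau)\ge\tfrac{f(R)}{(p+1)R^p}\tau^{p+1}-C_R$, where the paper simply asserts the existence of constants $\alpha_1$ and the limit $\varphi(t)\to-\infty$.
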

\begin{proof}\hspace{1cm} 
\begin{itemize} 
\item[(i)] From $(F_1)$,  there exists a constant $\alpha_1>0$ such that $F_R(t)\leq \alpha_1|t|^{p+1}$,  for all  sufficiently small $R$. Then, by (iii) of Lemma \ref{lemma-inclusion}, we get
\begin{align*}
J_{R,\lambda}(u)&=\frac{1}{4}\int_{\mathbb{R}^N\times\mathbb{R}^N} |u(x)-u(y)|^2 K(x-y)\,dx\,dy-\lambda\int_{\Omega}F_R(u(x))\,dx\\ &\geq \frac{1}{4}\int_{\mathbb{R}^N\times\mathbb{R}^N} |u(x)-u(y)|^2 K(x-y)\,dx\,dy-\alpha_1\lambda\int_{\Omega}|u(x)|^{p+1}\,dx\\ &\geq \frac{1}{4}\|u\|_{X_0^s(\Omega)}^2-\lambda C\|u\|_{X_0^s(\Omega)}^{p+1}\\ &=\|u\|_{X_0^s(\Omega)}^2\left(\frac{1}{4}-\lambda C\|u\|_{X_0^s(\Omega)}^{p-1}\right).
\end{align*}
Hence, by taking $\rho_\lambda=(8\lambda C)^{\frac{-1}{p-1}}$ and $\beta_\lambda=\frac{\rho^2_\lambda}{8}$, we conclude the proof of assertion (i).   

\item[(ii)] Take $u\in X_0^s(\Omega)\setminus\{0\}$, $u\geq 0$, and consider $\varphi(t)=J_{R,\lambda}(tu)$, for $t\in \mathbb{R}$. It follows that  
$$\lim_{t\to +\infty}\varphi(t)= -\infty.$$ Then by choosing  $t_0>0$ sufficiently large so that $\varphi(t_0)<0$ and $e=t_0u\notin  B_{\rho_\lambda}(0)$, assertion (ii) holds.  
\end{itemize} 
\end{proof}	

As already mentioned before, the energy functional $J_{R,\lambda}$ of the truncated problem $(\ref{PLR})$ has the geometry of the Mountain Pass Theorem for all sufficiently small $R$. 
Since $f_R$ is a purely power for large values, it is not difficult to prove that $J_{R,\lambda}$ is a $C^1$-functional which satisfy the Palais-Smale condition. 
Consequently, we get the next existence result.

\begin{lemma}\label{lemma-conseceuncia-montana}
Let $\lambda>0$. Then, for each sufficiently small $R$ the problem (\ref{PLR}) admits a nontrivial nonnegative solution $u_{\lambda,R}\in X_0^s(\Omega)$.
\end{lemma}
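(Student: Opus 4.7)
The plan is to verify the remaining hypotheses of the Mountain Pass Theorem for $J_{R,\lambda}$ on the Hilbert space $X_0^s(\Omega)$. Lemma \ref{lema-geometria1} already furnishes the geometric conditions, so what I need to confirm is (a) that $J_{R,\lambda}$ is of class $C^1$ and (b) that it satisfies the Palais--Smale condition. Once these are in place, the theorem produces a nontrivial critical point at a level $c_{R,\lambda}\geq \beta_\lambda>0$, and the previous lemma forces it to be nonnegative.

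For (a) the quadratic Gagliardo term is plainly $C^1$. For the nonlinear term, the crucial point is the growth estimate $|f_R(t)|\leq C_R(1+|t|^p)$ with $p+1<2^*_s$: combined with the continuous embedding $X_0^s(\Omega)\hookrightarrow L^{p+1}(\Omega)$ from Lemma \ref{lemma-inclusion}(iii), a standard Nemytskii-operator argument shows that $u\mapsto\int_\Omega F_R(u)\,dx$ is $C^1$ with derivative $v\mapsto\int_\Omega f_R(u)v\,dx$, which identifies the derivative of $J_{R,\lambda}$ with the expression appearing in the weak formulation of (\ref{PLR}).

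For (b), which I expect to be the main technical step, I would take a sequence $(u_n)\subset X_0^s(\Omega)$ with $J_{R,\lambda}(u_n)\to c$ and $J'_{R,\lambda}(u_n)\to 0$. Since $f_R$ coincides with the pure power $\frac{f(R)}{R^p}t^p$ for $t>R$, a direct calculation using $F_R(t)=F_R(R)+\frac{f(R)}{R^p(p+1)}(t^{p+1}-R^{p+1})$ produces a constant $C_R>0$ such that
\[
(p+1)F_R(t)-tf_R(t)\leq C_R \qquad \text{for every } t\in\mathbb{R}.
\]
Applying this Ambrosetti--Rabinowitz-type inequality to $J_{R,\lambda}(u_n)-\frac{1}{p+1}\langle J'_{R,\lambda}(u_n),u_n\rangle$ gives
\[
\left(\frac{1}{2}-\frac{1}{p+1}\right)\|u_n\|_{X_0^s(\Omega)}^{2}\leq C'+o(1)\|u_n\|_{X_0^s(\Omega)},
\]
so $(u_n)$ is bounded because $p+1>2$. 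Passing to a subsequence, $u_n\rightharpoonup u$ in $X_0^s(\Omega)$ and, by the compact embedding of Lemma \ref{lemma-inclusion}(iii), $u_n\to u$ strongly in $L^{p+1}(\Omega)$. Feeding this strong convergence together with the subcritical growth of $f_R$ into $\langle J'_{R,\lambda}(u_n)-J'_{R,\lambda}(u),u_n-u\rangle$ makes the nonlinear part vanish in the limit, while the remaining bilinear piece collapses to $\|u_n-u\|_{X_0^s(\Omega)}^{2}$, proving strong convergence.

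Having $C^1$-smoothness and (PS), together with the geometry from Lemma \ref{lema-geometria1}, the Mountain Pass Theorem yields a critical point $u_{\lambda,R}$ of $J_{R,\lambda}$ at level $c_{R,\lambda}\geq\beta_\lambda>0$, hence nontrivial; the preceding lemma guarantees $u_{\lambda,R}\geq 0$ a.e.\ in $\Omega$. The delicate point in the whole argument is the uniform-in-$t$ form of the AR-type inequality: because $F_R$ is not a scale-invariant pure power on all of $\mathbb{R}$ but only beyond the cutoff $R$, the inequality must carry an additive constant $C_R$, but this is still enough to force PS sequences to stay bounded once coupled with the fact that $p+1>2$.
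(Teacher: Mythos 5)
Your proposal is correct and takes essentially the same route as the paper: the authors simply remark that because $f_R$ is a pure power for $t>R$, the functional $J_{R,\lambda}$ is $C^1$ and satisfies Palais--Smale, and then invoke the Mountain Pass Theorem together with the preceding nonnegativity lemma. You have merely filled in the standard Nemytskii/AR-type details that the paper leaves implicit, so nothing is new or missing.
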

Note that the function $u_{\lambda,R} \in X_0^s(\Omega)$ in Lemma \ref{lemma-conseceuncia-montana} satisfies 
\begin{equation}\label{u-lambda-R}
J_{R,\lambda}(u_{\lambda,R})=c_{\lambda, R} \quad  \text{ and }\quad J_{R,\lambda}'(u_{\lambda,R})=0,
\end{equation}
where $c_{\lambda,R}$ corresponds to the Mountain Pass level, that is,
$$
c_{\lambda,R}:=\inf\Big\{\sup_{t\in [0,1]} J_{R,\lambda}(\gamma(t)) \colon \gamma \in C([0,1]; X_0^s(\Omega)), \gamma(0)=0, \gamma(1)=u_{\lambda,R}\Big\}.
$$
%
\begin{lemma}\label{mplevel}
Let $\lambda>0$. If $R>0$ is sufficiently small, then
\begin{equation}\label{ML}
\lim_{\lambda \to +\infty}c_{\lambda,R}=0.
\end{equation}
\end{lemma}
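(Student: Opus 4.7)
The plan is to bound $c_{\lambda,R}$ from above by $\sup_{s\ge 0} J_{R,\lambda}(s u_0)$ for a fixed nontrivial nonnegative test function $u_0\in X_0^s(\Omega)$, and to show that this supremum is of order $\lambda^{-2/(p-1)}$, which vanishes as $\lambda\to\infty$. Together with $c_{\lambda,R}\ge\beta_\lambda>0$ from Lemma \ref{lema-geometria1}(i), this will yield (\ref{ML}).

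The first step is to extract from $(F_1)$ a lower bound on $F_R$ valid on the whole half-line. Given $\varepsilon\in(0,1)$, $(F_1)$ supplies $\delta>0$ with $f(t)\ge (1-\varepsilon)t^p$ on $[0,\delta]$; take $R\le\delta$. Direct integration gives $F_R(t)=F(t)\ge \frac{1-\varepsilon}{p+1}t^{p+1}$ on $[0,R]$, while for $t>R$ the explicit definition $f_R(\tau)=(f(R)/R^p)\tau^p$ on $(R,\infty)$, combined with the inequality $f(R)/R^p\ge 1-\varepsilon$, extends the bound to
$$F_R(t)\ge \frac{1-\varepsilon}{p+1}\,t^{p+1}\qquad\text{for every } t\ge 0.$$

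Fixing any $u_0\in X_0^s(\Omega)\setminus\{0\}$ with $u_0\ge 0$, the pointwise bound above produces, for all $s\ge 0$,
$$J_{R,\lambda}(s u_0)\le \frac{s^2}{2}\|u_0\|_{X_0^s(\Omega)}^2-\frac{(1-\varepsilon)\lambda}{p+1}\,s^{p+1}\int_\Omega u_0^{p+1}\,dx.$$
One-variable optimization of the polynomial $As^2-\lambda B s^{p+1}$ on the right-hand side, with constants $A,B>0$ depending only on $u_0,p,\varepsilon$, yields a unique maximizer $s^*_\lambda = C_1\lambda^{-1/(p-1)}$ and maximum value bounded by $C_2\lambda^{-2/(p-1)}$.

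Finally, by Lemma \ref{lema-geometria1}(ii), for each sufficiently large $\lambda$ one has some $T_\lambda>0$ with $J_{R,\lambda}(T_\lambda u_0)<0$, so the path $\gamma(t)=tT_\lambda u_0$ is admissible for the Mountain Pass scheme. Since the Mountain Pass critical value $c_{\lambda,R}=J_{R,\lambda}(u_{\lambda,R})$ recorded in (\ref{u-lambda-R}) is bounded above by the supremum of $J_{R,\lambda}$ along any admissible path, we conclude $c_{\lambda,R}\le \sup_{s\ge 0}J_{R,\lambda}(s u_0)\le C_2\lambda^{-2/(p-1)}\to 0$. The main obstacle is establishing the \emph{global} pointwise lower bound on $F_R$: the naive use of $(F_1)$ only controls $F$ on $[0,\delta]$, and one must exploit the precise power-like definition of $f_R$ on $(R,\infty)$ together with the asymptotic $f(R)/R^p\to 1$ to propagate the estimate past $R$ with uniform constant.
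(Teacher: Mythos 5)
Your proposal is correct and follows essentially the same route as the paper: derive a global lower bound $F_R(t)\ge d\,t^{p+1}$ for $t\ge 0$ and $R$ small (using $(F_1)$ near $0$ and the power-like definition of $f_R$ past $R$), bound $c_{\lambda,R}$ above by $\sup_{s\ge 0}J_{R,\lambda}(su_0)$ along a ray, and optimize the one-variable polynomial to obtain $c_{\lambda,R}=O(\lambda^{-2/(p-1)})$. You are somewhat more explicit than the paper about how the estimate on $F_R$ is propagated past $R$, but the key lemma, the test path, and the scaling are the same.
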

\begin{proof} 
From $(F_1)$, we have that there exists a constant $d>0$ such that $$F_R(t)\geq d\, t^{p+1}\quad \mbox{for all } t\ge 0\mbox{ and for all sufficiently small $R$}.$$ Consider now  the positive constant $\beta_{\lambda}$ and  the function $e\in X_0^s(\Omega)$ given in Lemma \ref{lema-geometria1}. Since $0<\beta_\lambda\leq c_{\lambda,R}$, from Lemma \ref{lema-geometria1} (ii), we get
\begin{align*}
c_{\lambda,R}&\leq \max_{t\in[0,\infty]}\left(\frac{1}{4}\|te\|_{X_0^s(\Omega)}^2-\lambda\int_\Omega F_R(te)\right) \\ &\leq \max_{t\in[0,\infty]}\left(\frac{1}{4}\|e\|^2_{X_0^s(\Omega)}t^2-\lambda d\|e\|^{p+1}_{L^{p+1}(\Omega)}t^{p+1}\right)\\ &\leq t_{\lambda}^2 \frac{\|e\|^2_{X_0^s(\Omega)}}{4}\frac{(p-1)}{p+1},
\end{align*}
where $t_{\lambda}^{p-1}=\frac{\|e\|^2_{X_0^s(\Omega)}}{2\lambda\|e\|_{L^{p+1}(\Omega)}^{p+1} (p+1)d}$. Since $t_{\lambda}\rightarrow 0$ as $\lambda\rightarrow +\infty$, we immediately can deduce that (\ref{ML}) holds, which completes the proof.	
\end{proof}

We now are interested in establishing  the convergence rate of the solution $u_{\lambda,R}$ of (\ref{PLR}) in terms of the Mountain Pass level $c_{\lambda,R}$. 
The following lemma points in that direction.
\begin{lemma}\label{mpsol}
Let $\lambda >0$ and $u_{\lambda,R}\in X_0^s(\Omega)$  the function obtained in \eqref{u-lambda-R}. 
Then, there exists $0<\bar{R}<1$ such that $\|u_{\lambda,R}\|_{X_0^s(\Omega)}={O}(c_{\lambda,R}^\frac{1}{2})$ for any $0<R\leq\bar{R}$. In particular, if $0<R\leq\bar{R}$, then 
$$
\lim_{\lambda\to +\infty}\|u_{\lambda,R}\|_{X_0^s(\Omega)}=0.
$$
\end{lemma}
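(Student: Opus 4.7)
The plan is to combine the two identities that \eqref{u-lambda-R} makes available,
\[
\tfrac{1}{2}\|u_{\lambda,R}\|_{X_0^s(\Omega)}^{2} - \lambda \int_\Omega F_R(u_{\lambda,R})\,dx \;=\; c_{\lambda,R},\qquad
\|u_{\lambda,R}\|_{X_0^s(\Omega)}^{2} \;=\; \lambda \int_\Omega f_R(u_{\lambda,R})\,u_{\lambda,R}\,dx,
\]
via an Ambrosetti--Rabinowitz type pointwise comparison $F_R(t)\le \tfrac{1+o(1)}{p+1}\,f_R(t)\,t$, valid uniformly in $t\ge 0$ and in $R$ sufficiently small.

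To produce such a comparison I would fix $\varepsilon>0$ and use $(F_1)$ to choose $\bar R\in(0,1)$ so small that $(1-\varepsilon)\,t^{p}\le f(t)\le (1+\varepsilon)\,t^{p}$ for every $t\in[0,\bar R]$. For $R\le\bar R$ and $t>R$, the truncation formula gives $f_R(t)=\frac{f(R)}{R^{p}}\,t^{p}$, and since $\frac{f(R)}{R^{p}}\in(1-\varepsilon,1+\varepsilon)$ the same two-sided bound $(1-\varepsilon)t^p\le f_R(t)\le(1+\varepsilon)t^p$ extends to all $t\ge 0$. Integrating on $[0,t]$ then yields $(1-\varepsilon)\frac{t^{p+1}}{p+1}\le F_R(t)\le (1+\varepsilon)\frac{t^{p+1}}{p+1}$ for every $t\ge 0$, and consequently
\[
F_R(t)\ \le\ \frac{1+\varepsilon}{(1-\varepsilon)(p+1)}\,f_R(t)\,t,\qquad t\ge 0.
\]

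Substituting this inequality into the first identity above and using the second,
\[
c_{\lambda,R}\ \ge\ \tfrac{1}{2}\|u_{\lambda,R}\|_{X_0^s(\Omega)}^{2}-\frac{1+\varepsilon}{(1-\varepsilon)(p+1)}\,\|u_{\lambda,R}\|_{X_0^s(\Omega)}^{2}\ =\ \kappa_{\varepsilon}\,\|u_{\lambda,R}\|_{X_0^s(\Omega)}^{2},
\]
where $\kappa_{\varepsilon}:=\tfrac{1}{2}-\tfrac{1+\varepsilon}{(1-\varepsilon)(p+1)}$. Since $p+1>2$, fixing $\varepsilon$ small enough (and shrinking $\bar R$ accordingly) keeps $\kappa_{\varepsilon}>0$, so $\|u_{\lambda,R}\|_{X_0^s(\Omega)}^{2}\le \kappa_{\varepsilon}^{-1}\,c_{\lambda,R}$; this is the claimed $O(c_{\lambda,R}^{1/2})$ estimate. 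The second assertion is then immediate from Lemma \ref{mplevel}, since $c_{\lambda,R}\to 0$ as $\lambda\to+\infty$.

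The one delicate point is that the pointwise bound on $F_R$ in terms of $f_R(t)\,t$ must hold \emph{uniformly} in $t\in[0,\infty)$, not merely in the regime $t\le R$ where $(F_1)$ applies directly. What makes it work is that on the truncation range $t>R$ the function $f_R$ is a pure power with prefactor $f(R)/R^{p}\to 1$ by $(F_1)$, so the two-sided bound inherited from the behaviour at the origin propagates to every $t\ge 0$ with the same constants $1\pm\varepsilon$; a single integration from $0$ then upgrades it to the required bound on $F_R$. Once this uniform control is in hand, the estimate on $\|u_{\lambda,R}\|_{X_0^s(\Omega)}$ is the standard superquadratic Mountain Pass bookkeeping.
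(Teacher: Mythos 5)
Your proof is correct and follows essentially the same strategy as the paper: combine the energy identity $c_{\lambda,R}=\tfrac12\|u_{\lambda,R}\|_{X_0^s(\Omega)}^2-\lambda\int_\Omega F_R(u_{\lambda,R})$ with the Euler--Lagrange identity $\|u_{\lambda,R}\|_{X_0^s(\Omega)}^2=\lambda\int_\Omega f_R(u_{\lambda,R})u_{\lambda,R}$, using uniform two-sided power bounds on $f_R$ (valid for small $R$ precisely because $(F_1)$ forces $f(R)/R^p\to 1$, so the truncation inherits the constants) to deduce $c_{\lambda,R}\ge\kappa\|u_{\lambda,R}\|_{X_0^s(\Omega)}^2$. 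The only difference is cosmetic: the paper routes the same computation through an auxiliary parameter $\theta\in\bigl[\tfrac{\alpha_1}{\alpha_0(p+1)},\tfrac14\bigr)$, and in doing so drops a factor of $2$ coming from the normalization $\|u\|^2_{X_0^s(\Omega)}=\tfrac12\int|u(x)-u(y)|^2K$, which produces the over-restrictive side condition $(p+1)\alpha_0>4\alpha_1$ (empty for $\alpha_0,\alpha_1$ near $1$ unless $p>3$); your direct substitution via $F_R(t)\le\tfrac{1+\varepsilon}{(1-\varepsilon)(p+1)}f_R(t)t$ gives the correct threshold $p>1$ and is the cleaner way to write it.
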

\begin{proof}
Thanks to $(F_1)$, we have that there exists $0<\bar{R}<1$ such that for any $0<R\leq\bar{R}$, there are positive constants  $\alpha_0$ and $\alpha_1$ such that $(p+1){\alpha}_0>4\alpha_1$, and 
$$
\alpha_0 t^p\leq f_R(t)\leq \alpha_1 t^p\quad \mbox{for all $t\geq 0$}.
$$
Notice that since $u_{\lambda,R}$ is a weak solution of (\ref{PLR}), then 
$$
\frac{1}{2}\int_{\mathbb{R}^N\times\mathbb{R}^N} (u_{\lambda,R}(x)-u_{\lambda,R}(y))(v(x)-v(y))K(x-y)\,dx\,dy=\lambda\int_{\Omega}f_R(u_{\lambda,R})v(x)\,dx
$$
for every $v\in X_0^s(\Omega)$, and
\begin{align*}
J_{R,\lambda}(u_{\lambda,R})&=c_{\lambda,R}\\
&=\frac{1}{4}\int_{\mathbb{R}^N\times\mathbb{R}^N} |u_{\lambda,R}(x)-u_{\lambda,R}(y)|^2 K(x-y)\,dx\,dy-\lambda\int_{\Omega}F_R(u_{\lambda,R}(x))\,dx.
\end{align*}
Then by taking $v=u_{\lambda,R}$ and by combining the two previous equalities, for any $\frac{\alpha_1}{\alpha_0(p+1)}\leq\theta<\frac{1}{4}$ we get 
\begin{align*}
c_{\lambda,R}&=\left(\frac{1}{4}-\theta\right)\int_{\mathbb{R}^N\times\mathbb{R}^N} |u_{\lambda,R}(x)-u_{\lambda,R}(y)|^2 K(x-y)\,dx\,dy\\ &\quad-\lambda\int_{\Omega}F_R(u_{\lambda,R}(x))\,dx+\lambda\theta\int_{\Omega}f_R(u_{\lambda,R})u_{\lambda,R}&\\ &\geq \left(\frac{1}{4}-\theta\right)\|u_{\lambda,R}\|_{X_0^s(\Omega)}^2+\lambda\left(\theta \alpha_0-
\frac{\alpha_1}{p+1}\right)\int_{\Omega}|u_{\lambda,R}|^{p+1}\,dx\\ &\geq \left(\frac{1}{4}-\theta\right)\|u_{\lambda,R}\|_{X_0^s(\Omega)}^2,
\end{align*}
which finishes the proof.
\end{proof}

Next lemma follows the arguments given in \cite{Brasco-Lindgren-Parini} (see also \cite{Iturriaga-Lorca-Montenegro}), and it gives us the boundedness in the $L^{\infty}$-norm of any solution in $X_0^s(\Omega)$ of the problem (\ref{PLR}). 
\begin{lemma}\label{mos}
If $u\in X_0^s(\Omega)$ is a nonnegative solution of (\ref{PLR}), then the $u$ belongs to $L^{\infty}(\Omega)$. 
Moreover, there exists a constant $L>0$ such that 
$$
\|u\|_{L^{\infty}(\Omega)}\leq \lambda^\frac{1}{2_s^*-p-1} L\|u\|_{L^{2^*_s}(\Omega)}^\frac{2_s^*-2}{2_s^*-p-1}.
$$
\end{lemma}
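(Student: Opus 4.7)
The plan is to implement the fractional Moser iteration scheme of \cite{Brasco-Lindgren-Parini}. As a preliminary, I first observe that, thanks to $(F_1)$ and the definition of $f_R$, there exists $A > 0$ such that $f_R(t) \leq A t^p$ for every $t \geq 0$: indeed, for $t \in [0, R]$ this follows from $f(t)/t^p \to 1$ as $t \to 0^+$, while for $t > R$ it is immediate from the very definition of $f_R$.

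Next, for each $\beta \geq 0$ and $M > 0$, I plan to test the weak formulation of (\ref{PLR}) against the Lipschitz function $\varphi = u\min\{u, M\}^{2\beta} \in X_0^s(\Omega)$, and to invoke the standard pointwise algebraic inequality
$$
(a-b)\bigl(a\Psi(a)^2 - b\Psi(b)^2\bigr) \geq \bigl(a\Psi(a) - b\Psi(b)\bigr)^2, \qquad a, b \geq 0,
$$
with $\Psi(t) = \min\{t, M\}^\beta$, to bound the Gagliardo form from below by the squared $X_0^s$-seminorm of $w := u\,\Psi(u)$. Combining this with Lemma \ref{lemma-inclusion}(i) and the estimate $f_R(u) \leq A u^p$ produces
$$
\|u\min\{u, M\}^\beta\|_{L^{2_s^*}(\Omega)}^2 \leq C\,\lambda \int_\Omega u^{p+1}\min\{u, M\}^{2\beta}\, dx.
$$
Sending $M \to \infty$ by monotone convergence, whenever $u \in L^{p+1+2\beta}(\Omega)$ one obtains
$$
\|u\|_{L^{(\beta+1)2_s^*}(\Omega)}^{2(\beta+1)} \leq C(\beta)\, \lambda \, \|u\|_{L^{p+1+2\beta}(\Omega)}^{p+1+2\beta}.
$$

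I will then iterate this estimate along the exponents $\alpha_0 = 2_s^*$, $\alpha_{k+1} = \frac{2_s^*}{2}(\alpha_k - p + 1)$, corresponding to $\beta_k = (\alpha_k - p - 1)/2 \geq 0$. The hypothesis $p+1 < 2_s^*$ places the fixed point $\alpha^* = 2_s^*(p-1)/(2_s^*-2)$ of this affine recursion strictly below $\alpha_0$, so $\alpha_k \to \infty$ geometrically with ratio $2_s^*/2$. The standard telescoping computation (with $\prod_{j<k}\frac{\alpha_j\, 2_s^*}{2\alpha_{j+1}} \to (2_s^*-2)/(2_s^*-p-1)$ and the $\lambda$-factors summing geometrically to $1/(2_s^*-p-1)$) then yields
$$
\|u\|_{L^\infty(\Omega)} = \lim_{k\to\infty}\|u\|_{L^{\alpha_k}(\Omega)} \leq L\,\lambda^{1/(2_s^*-p-1)}\,\|u\|_{L^{2_s^*}(\Omega)}^{(2_s^*-2)/(2_s^*-p-1)},
$$
as claimed.

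The hardest part will be the bookkeeping of constants through the infinite iteration: the constants $C(\beta_k)$ in the single-step estimate grow with $\beta_k$, and I must verify that the resulting series is summable so that the iteration closes and produces a finite multiplicative constant $L$. The passage $M \to \infty$ in each step will be handled inductively, since at step $k$ the integrability $u \in L^{p+1+2\beta_k}(\Omega)$ is guaranteed by the previous step, with the initial integrability $u \in L^{2_s^*}(\Omega)$ provided by the Sobolev embedding applied to $u \in X_0^s(\Omega)$.
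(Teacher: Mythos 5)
Your proposal is essentially the same Moser iteration scheme from \cite{Brasco-Lindgren-Parini} that the paper uses, but the bookkeeping differs in two minor respects, and there is one small slip worth correcting.

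First, the slip: the pointwise inequality
$(a-b)\bigl(a\Psi(a)^2 - b\Psi(b)^2\bigr) \geq \bigl(a\Psi(a) - b\Psi(b)\bigr)^2$
is false as stated. Taking $\Psi(t)=t$ (say with $M$ large), $a=2$, $b=1$ gives $7 \geq 9$. The correct version carries a $\beta$-dependent constant,
$(a-b)\bigl(a\Psi(a)^2 - b\Psi(b)^2\bigr) \geq \frac{2\beta+1}{(\beta+1)^2}\bigl(a\Psi(a)-b\Psi(b)\bigr)^2$,
matching exactly the factor $\frac{4(2k+1)}{(2k+2)^2}=\frac{2k+1}{(k+1)^2}$ that the paper carries through from inequality (C.2) of \cite{Brasco-Lindgren-Parini}. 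You do in fact absorb this into the $C(\beta)$ in the subsequent display and you explicitly flag the need to track the growth of $C(\beta_k)$, so the overall plan survives; but the unqualified pointwise inequality should be fixed.

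Second, the genuine (harmless) difference from the paper: you iterate directly on the chain $\alpha_{k+1}=\frac{2_s^*}{2}(\alpha_k-p+1)$, an affine recursion that you resolve by locating its fixed point $\alpha^*=\frac{2_s^*(p-1)}{2_s^*-2}$; the paper instead applies H\"older's inequality at each step to split off a fixed $\|u\|_{L^{2_s^*}(\Omega)}^{p-1}$ factor, which makes the exponent chain purely multiplicative, $k_n+1=(2_s^*/2l)^n$. Both routes produce the same limiting exponents $\frac{1}{2_s^*-p-1}$ on $\lambda$ and $\frac{2_s^*-2}{2_s^*-p-1}$ on $\|u\|_{L^{2_s^*}(\Omega)}$; the paper's H\"older splitting makes the product telescoping slightly cleaner, while your affine version avoids the extra H\"older step at the cost of a slightly more delicate passage to the limit in the exponent and constant series. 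The choice of test function ($u\,u_M^{2\beta}$ versus the paper's $u_M^{2k+1}$) is likewise a cosmetic variant; both are Lipschitz compositions of $u$ vanishing at $0$, hence admissible in $X_0^s(\Omega)$.
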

\begin{proof}
Since $(F_1)$ is verified, it is easy to check that there exists $\rho>0$ such that $|f_R(t)|\leq \rho|t|^p$ for all $t\in\mathbb{R}$. 

Let $M>0$ given and consider $u_M:=\min\{u,M\}$. 
Note that $u_M$ belongs to $X_0^s(\Omega)$ because it is just the composition of $u$ with a Lipschitz function.  
Consider now the function $v=u_M^{2k+1}$ for $k\in \mathbb{N}_0$ given, as a test function in the equation that corresponds to the weak formulation to problem (\ref{PLR}) verified by $u$. 
Then,
\begin{align*}
\frac{1}{2}\int_{\mathbb{R}^N\times\mathbb{R}^N} &(u(x)-u(y))(u_M^{2k+1}(x)-u_M^{2k+1}(y))K(x-y)\,dx\,dy\\ &=\lambda\int_{\Omega}f_R(u)u_M^{2k+1}(x)\,dx\\ 
&\leq \lambda\rho\int_{\Omega}u^{p+{2k+1}}(x)\,dx,
\end{align*}
where we have used that $u_M\leq u$. By using inequality (C.2) of \cite{Brasco-Lindgren-Parini}, we obtain 
\begin{align*}
\frac{1}{2}\int_{\mathbb{R}^N\times\mathbb{R}^N} &(u(x)-u(y))(u_M^{2k+1}(x)-u_M^{2k+1}(y))K(x-y)\,dx\,dy\\ &\geq \frac{4({2k+1})}{({2k+1}+1)^2}\int_{\mathbb{R}^N\times\mathbb{R}^N} |u_M^{k+1}(x)-u_M^{k+1}(y)|^2K(x-y)\,dx\,dy\\ &\geq \frac{(2k+1)}{({k+1})^2}C\left(\int_{\mathbb{R}^{N}}(u_M^{k+1})^\frac{2N}{N-2s}\right)^\frac{N-2s}{N},
\end{align*}
and  by H\"older's inequality, we get
\begin{align*}
\left(\int_{\mathbb{R}^{N}}(u_M^{k+1})^\frac{2N}{N-2s}\right)^\frac{N-2s}{N}&\leq \lambda\rho\frac{({k+1})^2}{C(2k+1)}\int_{\Omega}u^{p+{2k+1}}(x)
\,dx\\ &=\lambda\rho\frac{({k+1})^2}{C(2k+1)}\int_{\Omega}u^{p-1}u^{2(k+1)}\,dx\\ &\leq \lambda\rho\frac{({k+1})^2}{C(2k+1)}\left(\int_{\Omega}u^{2(k+1)l}dx\right)^\frac{1}{l}\left(\int_{\Omega}u^{2^*_s}dx\right)^\frac{p-1}{2_s^*},
\end{align*}
where $l=\frac{2_s^*}{2_s^*-(p-1)}$. Taking the limit as $M$ goes to $+\infty$, we obtain
\begin{align*}
\|u\|_{L^{2_s^*(k+1)}(\mathbb{R}^N)}&\leq \left(\frac{\rho\lambda(k+1)^2}{C(2k+1)}\right)^\frac{1}{2(k+1)}\|u\|_{L^{2l(k+1)}(\Omega)}\|u\|_{L^{2_s^*}(\Omega)}^\frac{p-1}{2(k+1)}.
\end{align*}
Defining $k_1\in \mathbb{R}$ such a way that $2 l(k_1+1)=2_s^*$, that is, 
$$
k_1+1=\frac{2_s^*}{2l}=\frac{2_s^*-(p-1)}{2}>1,
$$
it follows that
\begin{align*}
\|u\|_{L^{2_s^*(k_1+1)}(\mathbb{R}^N)}&\leq \left(\frac{\rho\lambda(k_1+1)^2}{C(2k_1+1)}\right)^\frac{1}{2(k_1+1)}\|u\|_{L^{2_s^*}(\Omega)}\|u\|_{L^{2_s^*}(\Omega)}^\frac{p-1}{2(k_1+1)}\\ &=\left(\frac{\rho\lambda(k_1+1)^2}{C(2k_1+1)}\|u\|_{L^{2_s^*}(\Omega)}^{p-1}\right)^\frac{1}{2(k_1+1)}\|u\|_{L^{2_s^*}(\Omega)}.
\end{align*}
Now, we proceed by induction  as $2l(k_n+1)=2_s^*(k_{n-1}+1)$, then $k_n+1=\left(\frac{2_s^*}{2l}\right)^n$ and
\begin{equation}\label{moser}
\begin{split}
\|u\|_{L^{2_s^*(k_n+1)}(\mathbb{R}^N)}&\leq \left(\frac{\rho\lambda(k_n+1)^2}{C(2k_n+1)}\right)^\frac{1}{2(k_n+1)}\|u\|_{L^{2l(k_n+1)}(\Omega)}\|u\|_{L^{2_s^*}(\Omega)}^\frac{p-1}{2(k_n+1)}\\
&= \left(\frac{\rho\lambda(k_n+1)^2}{C(2k_n+1)} \|u\|_{L^{2_s^*}(\Omega)}^{p-1}\right)^\frac{1}{2(k_n+1)}\|u\|_{L^{2l(k_n+1)}(\Omega)}\\ 
&= \left(\frac{\rho\lambda(k_n+1)^2}{C(2k_n+1)} \|u\|_{L^{2_s^*}(\Omega)}^{p-1}\right)^\frac{1}{2(k_n+1)}\|u\|_{L^{2^*_s(k_{n-1}+1)}(\Omega)}\\
&\leq  \prod_{i=1}^n\left(\frac{\rho\lambda(k_i+1)^2}{C(2k_i+1)} \|u\|_{L^{2_s^*}(\Omega)}^{p-1}\right)^\frac{1}{2(k_i+1)} \|u\|_{L^{2_s^*}(\Omega)}\\
&\leq  \prod_{i=1}^n\left(\frac{\rho\lambda(k_i+1)^2}{C(2k_i+1)} \right)^\frac{1}{2(k_i+1)}\|u\|_{L^{2_s^*}(\Omega)}^{1+\frac{p-1}{2}\sum_{i=1}^n\frac{1}{k_i+1}}.
\end{split}
\end{equation}
Setting 
$$
C_1=\rho^\frac{1}{2_s^*-p-1}C^\frac{-1}{2_s^*-p-1}\lim_{n\to\infty}\prod_{i=1}^n\left(\frac{(k_i+1)^2}{2k_i+1}\right)^\frac{1}{2(k_i+1)},
$$
and letting $n\to\infty$ in \eqref{moser} we obtain
$$
\|u\|_{L^\infty(\mathbb{R}^N)}\leq\lambda^\frac{1}{2_s^*-p-1}C_1\|u\|_{L^{2_s^*(\Omega)}}^\frac{2_s^*-2}{2_s^*-p-1}.
$$
\end{proof}

Now, we have all ingredients to conclude the existence of {the first} solution to the problem $(\ref{P})$ if $\lambda$ is chosen to be sufficiently large.
\begin{proof}[Proof of Theorem \ref{main}]
By combining Lemmas \ref{lemma-conseceuncia-montana}, \ref{mplevel}, \ref{mpsol} and \ref{mos} we get that for each sufficiently small $R$,
$$
\|u_{\lambda,R}\|_{L^\infty(\Omega)}\leq C\lambda^\frac{-1}{p-1},
$$
where $u_{\lambda,R}$ verifies (\ref{u-lambda-R}), and $C$ is a positive constant independent of both $R$ and $ \lambda$. 
Indeed, recalling that $c_{\lambda,R}\sim \lambda^{-\frac{2}{p-1}}$, we have that
\begin{align*}
\|u_{\lambda,R}\|_{L^\infty(\mathbb{R}^N)} &\le \lambda^\frac{1}{2_s^*-p-1}C_1\|u_{\lambda,R}\|_{L^{2_s^*(\Omega)}}^\frac{2_s^*-2}{2_s^*-p-1}\\
&= C_1\left( \lambda \|u_{\lambda,R}\|_{L^{2_s^*(\Omega)}}^{2_s^*-2}  \right)^\frac{1}{2_s^*-p-1}\\
&\le C \left( \lambda \|u_{\lambda,R}\|_{X^s_0(\Omega)}^{2_s^*-2}  \right)^\frac{1}{2_s^*-p-1}\\
&\le C \left( \lambda (c_{\lambda,R}^{\frac{1}{2}})^{2_s^*-2}  \right)^\frac{1}{2_s^*-p-1}\\
&\le C \left( {\lambda}^{1-\frac{2_s^*-2}{p-1}} \right)^\frac{1}{2_s^*-p-1}\\
&\le C \lambda^{-\frac{1}{p-1}}\\
 &\to 0\qquad \text{ as } \lambda \to +\infty,
\end{align*}
This means that there exists $\lambda^*>0$ such that $0\le u_{\lambda,R}\leq R<1$ for any $\lambda\geq \lambda^*$. 
Hence, according to Lemma \ref{lemma-conseceuncia-montana}, $u_{\lambda}:=u_{\lambda,R}$ becomes a nontrivial nonnegative  solution of the original problem (\ref{P}) since  $f(u_{\lambda})=f_R(u_{\lambda,R})$ when $0\le u_{\lambda}\le R$. Therefore, the proof is completed. 
\end{proof}
%

\setcounter{equation}{0}
\section{The second solution}\label{section4}

The aim of this section is to prove that if $\lambda>0$ is sufficiently large, then there exists a  {\em second} solution to problem (\ref{P}). 
We will reach the goal by means of the sub- and super-solution method. 

For convenience, we start by considering an auxiliary problem. 
Let $g\colon \Omega \times \mathbb{R}\to  \mathbb{R}$ de a function. 
In what follows we will assume on $g$ the following,
\begin{itemize}
	\item[$(G_1)$] $g(x,t)$ is a Carath\'eodory function (i.e. $g(\cdot,t)$ is measurable for all $t\in \mathbb{R}$ and $g(x,\cdot)$ is continuous for a.e. $x\in\Omega$), and $g(\cdot,t)$ is bounded if $t$ belongs to bounded sets.
	\item[$(G_2)$] There exists $M>0$ such that the map $t\mapsto g(x,t)+Mt$ is nondecreasing for a.e. $x\in\Omega$.
\end{itemize}

Consider now the auxiliary problem 
\begin{equation}\label{aux-2ndsolution}
\left\{\begin{array}[c]{lll}
(-\Delta)^s u = g(x,u) & \text{in }\Omega,\\
u=0 & \text{on } \Omega^c. 
\end{array}\right.
\end{equation} 
For dealing with (\ref{aux-2ndsolution}), it is convenient to introduce a precise definition of sub- and super-solution  in our context, which we do with the help of the following lemma.
\begin{lemma}{\cite[Lemma 1.26]{Molica-Radulescu-Servadei}}
Let $\varphi\in C_0^2(\Omega)$ and $u\in X^s(\Omega)\cap L^\infty (\Omega^c)$. Then, the following equalities hold true:
\begin{align*}
\int_{\mathbb{R}^N\times\mathbb{R}^N}(u(x)-u(y))&(\varphi(x)-\varphi(y))K(x-y)\,dx\,dy\\ &=\int_Q (u(x)-u(y))(\varphi(x)-\varphi(y))K(x-y)\,dx\,dy\\ &=\int_{\mathbb{R}^N\times\mathbb{R}^N}u(x)(2\varphi(x)-\varphi(x+y)-\varphi(x-y))K(y)\,dx\,dy.
\end{align*}
\end{lemma}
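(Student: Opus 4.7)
The plan is to establish the two equalities separately. The first is essentially a support argument: since $\varphi \in C_0^2(\Omega)$ vanishes on $\Omega^c$, for any $(x,y) \in \Omega^c \times \Omega^c$ one has $\varphi(x) = \varphi(y) = 0$, and so the integrand $(u(x)-u(y))(\varphi(x)-\varphi(y))K(x-y)$ vanishes there. Because $(\mathbb{R}^N \times \mathbb{R}^N) \setminus Q = \Omega^c \times \Omega^c$, this immediately identifies the integral over $\mathbb{R}^N \times \mathbb{R}^N$ with the integral over $Q$.

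For the second equality, the natural strategy is a symmetrization. Performing the change of variables $z = y - x$ in the left-hand side transforms it into
\[
\int_{\mathbb{R}^N \times \mathbb{R}^N}(u(x)-u(x+z))(\varphi(x)-\varphi(x+z))K(z)\,dx\,dz,
\]
and averaging with the analogous expression obtained by $z \mapsto -z$ (using $K(-z)=K(z)$) symmetrizes the integrand in $z$. After expanding the products, applying Fubini, and performing translations $x \mapsto x \pm z$ on the cross terms $u(x\pm z)\varphi(x)$ and on the diagonal terms $u(x\pm z)\varphi(x\pm z)$ so as to bring the argument of $u$ back to $x$, the four resulting pieces should collapse to $\int u(x)(2\varphi(x)-\varphi(x+z)-\varphi(x-z))K(z)\,dx\,dz$, which is the desired right-hand side after renaming $z$ as $y$.

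The crux is that these manipulations split an absolutely convergent quantity into pieces that are \emph{not} individually integrable---the kernel $K(z) = |z|^{-(N+2s)}$ diverges at the origin---so Fubini and the changes of variables cannot be applied naively. I would therefore carry out the symmetrization on the truncated region $\{|z| > \varepsilon\}$, where $K$ is bounded by $\varepsilon^{-(N+2s)}$ and every step is legitimate, and then let $\varepsilon \to 0$. Convergence of the truncated left-hand side to the full one follows from dominated convergence via a Cauchy--Schwarz bound in terms of the Gagliardo-type quantities $|u(x)-u(y)|^2 K(x-y)$ and $|\varphi(x)-\varphi(y)|^2 K(x-y)$, which are integrable on $Q$ under our hypotheses. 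For the right-hand side the key input is the $C^2$-regularity of $\varphi$: a second-order Taylor expansion gives
\[
|2\varphi(x)-\varphi(x+z)-\varphi(x-z)| \le \|D^2\varphi\|_{L^\infty(\mathbb{R}^N)}\,|z|^2,
\]
which combined with $K(z)=|z|^{-(N+2s)}$ and $s<1$ produces the integrable majorant $C|u(x)|\,|z|^{2-N-2s}$ near $z=0$; outside a neighborhood of the origin, the compact support of $\varphi$ together with $u \in X^s(\Omega)\cap L^\infty(\Omega^c)$ ensures integrability. Dominated convergence then identifies the $\varepsilon \to 0$ limit with the claimed right-hand side.

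I expect the principal obstacle to be the combinatorial bookkeeping of the change-of-variables steps rather than the analytic estimates themselves: one must check that the translations produce exactly the asymmetric combination $u(x)(2\varphi(x)-\varphi(x+z)-\varphi(x-z))$ rather than a symmetric expression in which the roles of $u$ and $\varphi$ are swapped. This asymmetry is essential, since only $\varphi$---not $u$---carries enough regularity to admit the Taylor bound used to pass to the limit.
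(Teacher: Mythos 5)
The paper does not actually prove this lemma; it is quoted verbatim from \cite[Lemma~1.26]{Molica-Radulescu-Servadei}, so there is no in-text proof to compare against. That said, your argument is correct and is the standard one: the first equality is a pure support observation ($\varphi$ vanishes on $\Omega^c$, hence on $\Omega^c\times\Omega^c=(\mathbb{R}^N\times\mathbb{R}^N)\setminus Q$), and the second is obtained by the change of variables $z=y-x$, symmetrization in $z$, translations in $x$, and a truncation to $\{|z|>\varepsilon\}$ followed by dominated convergence, with the crucial integrable majorant $\|D^2\varphi\|_\infty |u(x)|\,|z|^{2-N-2s}$ coming from the second-order Taylor bound on $\varphi$ near $z=0$ and from $u\in L^\infty(\Omega^c)$ together with $\varphi$ compactly supported for $|z|$ large. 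One small bookkeeping remark: after symmetrizing and expanding you get eight terms, not four; the averaging factor $\tfrac12$, the two diagonal pieces $u(x\pm z)\varphi(x\pm z)\mapsto u(x)\varphi(x)$, and the two cross pieces $u(x\pm z)\varphi(x)\mapsto u(x)\varphi(x\mp z)$ recombine exactly to $u(x)\bigl(2\varphi(x)-\varphi(x+z)-\varphi(x-z)\bigr)$, so the count works out but is worth writing down explicitly. You also correctly flag that the dominated-convergence bound for the left side uses Cauchy--Schwarz on $Q$ only, where both Gagliardo seminorms are finite; the naive bound over all of $\mathbb{R}^N\times\mathbb{R}^N$ would fail for $u\in X^s(\Omega)$.
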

According the previous lemma, and since
\begin{align*}
(-\Delta)^s u(x)&=c(N,s)\,\mbox{P.V.} \int_{\mathbb{R}^N}\frac{u(x)-u(y)}{|x-y|^{N+2s}}dy\\ &=-\frac{1}{2}c(N,s)\int_{\mathbb{R}^N}\frac{u(x+y)+u(x-y)-2u(x)}{|y|^{N+2s}}dy,
\end{align*}
we can see that for any $u\in X^s(\Omega)\cap L^\infty(\Omega^c)$ and $\varphi\in C_0^2(\Omega)$ it is verified 
\begin{align*}
\int_{\Omega}(-\Delta)^su(x)\varphi(x)\,dx&=c(N,s)\,\mbox{P.V.} \int_{\mathbb{R}^N\times\mathbb{R}^N}(u(x)-u(y))(\varphi(x)-\varphi(y))K(x-y)\,dx\,dy\\ &=c(N,s)\,\mbox{P.V.}\int_Q (u(x)-u(y))(\varphi(x)-\varphi(y))K(x-y)\,dx\,dy\\ &=\int_{\Omega}u(x)(-\Delta)^s\varphi(x)\,dx.
\end{align*}
Now, we introduce the notion of sub- and super-solution associated to (\ref{aux-2ndsolution}) that we use here. 
\begin{definition}
Let $g:\Omega\times \mathbb{R}\to \mathbb{R}$ a Carath\'eodory function. We say that functions $\underline{u}\in L^1(\Omega)$ and $\overline{u}\in L^1(\Omega)$ are respectively a sub-solution and a super-solution of 
  (\ref{aux-2ndsolution}) if is respectively satisfied   
$$
\int_\Omega\underline{u}(x)(-\Delta)^s\phi(x)\,dx\leq\int_{\Omega}\phi(x)g(x,\underline{u}(x))\,dx
$$
and 
$$
\int_\Omega\overline{u}(x)(-\Delta)^s\phi(x)\,dx\geq\int_{\Omega}\phi(x)g(x,\overline{u}(x))\,dx,
$$
for all $\phi\in C_0^2(\Omega)$, $\phi\geq 0$ in $\Omega$. 
\end{definition}
Other important tool to prove the existence of a {\em second} nontrivial solution to (\ref{P}) is a weak comparison principle for a related problem. Despite the proof is straightforward, we include it for the reader convenience. 
\begin{lemma}[Weak comparison principle]\label{lemweakcomp} Let $\psi:\Omega\times \mathbb{R}\to \mathbb{R}$ be a Carath\'eodory function, such that  $\psi(x,0)=0$ in $\Omega$, $\psi(x,\cdot)$ is nondecreasing, and $0\leq \psi(x,t)\leq C(1+|t|)$ for all $(x,t)\in\Omega\times \mathbb{R}$, for some $C>0$.  If $u_1,u_2\in X^s(\Omega)\cap L^\infty(\Omega^c)$ are such that 
$$
\begin{array}[c]{lll}
\displaystyle \int_{\Omega}u_2(x)(-\Delta)^s\varphi(x)\,dx+\int_{\Omega}\psi(x,u_2(x))\varphi(x)\,dx\\ \displaystyle \qquad \leq \int_{\Omega}u_1(x)(-\Delta)^s\varphi(x)\,dx+\int_{\Omega}\psi(x,u_1(x))\varphi(x)\,dx
\end{array}
$$
for all $\varphi\in X_0^s(\Omega)$ with $\varphi\geq 0$, and 
$$
u_2\leq u_1\mbox{ on }\Omega^c, 
$$
then 
$$
u_2 \leq u_1\quad \mbox{ a.e. in }\Omega.
$$
\end{lemma}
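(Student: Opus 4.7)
The plan is to test the given inequality against the natural candidate $\varphi = (u_2 - u_1)^+$ and combine the Stampacchia-type pointwise inequality $(a-b)(a^+ - b^+) \ge (a^+ - b^+)^2$ with the monotonicity of $\psi(x,\cdot)$ to force $(u_2 - u_1)^+ \equiv 0$. First I would set $w := u_2 - u_1$ and verify that $w^+$ is an admissible nonnegative test function in $X_0^s(\Omega)$: since $u_1, u_2 \in X^s(\Omega)$ and $t \mapsto t^+$ is $1$-Lipschitz, $w^+ \in X^s(\Omega)$, while the boundary hypothesis $u_2 \le u_1$ on $\Omega^c$ forces $w^+ = 0$ a.e.\ in $\Omega^c$.

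Next, I would reinterpret the hypothesis via the bilinear form identity of the lemma preceding the sub-/super-solution definition. After a density argument (justified below), the inequality reads, for every nonnegative $\varphi \in X_0^s(\Omega)$,
\begin{align*}
\tfrac{1}{2}\!\!\int_{\mathbb{R}^N\times\mathbb{R}^N}\!(u_2(x)-u_2(y))(\varphi(x)-\varphi(y))K(x-y)\,dx\,dy + \int_\Omega \psi(x,u_2)\varphi\,dx\\
\le \tfrac{1}{2}\!\!\int_{\mathbb{R}^N\times\mathbb{R}^N}\!(u_1(x)-u_1(y))(\varphi(x)-\varphi(y))K(x-y)\,dx\,dy + \int_\Omega \psi(x,u_1)\varphi\,dx.
\end{align*}
Subtracting and plugging in $\varphi = w^+$ yields
\[
\tfrac{1}{2}\int_{\mathbb{R}^N\times\mathbb{R}^N}(w(x)-w(y))(w^+(x)-w^+(y))K(x-y)\,dx\,dy \le \int_\Omega \bigl(\psi(x,u_1)-\psi(x,u_2)\bigr)\,w^+(x)\,dx.
\]
For the left-hand side I would apply the elementary pointwise inequality $(a-b)(a^+-b^+) \ge (a^+-b^+)^2$ (checked by splitting into the four sign cases for $a,b$), obtaining a lower bound of $\|w^+\|_{X_0^s(\Omega)}^2$ in our normalization. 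For the right-hand side, the monotonicity of $\psi(x,\cdot)$ gives $\psi(x,u_1) - \psi(x,u_2) \le 0$ on the set $\{u_2 > u_1\} = \{w^+ > 0\}$, and $w^+ = 0$ elsewhere, so the integrand is nonpositive pointwise. Combining the two estimates forces $\|w^+\|_{X_0^s(\Omega)}^2 \le 0$, hence $w^+ \equiv 0$, i.e.\ $u_2 \le u_1$ a.e.\ in $\Omega$.

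The main obstacle is the extension of the inequality from $\varphi \in C_0^2(\Omega)$ to the full class of nonnegative $\varphi \in X_0^s(\Omega)$, needed because $w^+$ is not smooth. I would handle this by approximating $w^+$ in the $X_0^s(\Omega)$-norm by a sequence in $C_0^\infty(\Omega)$, and pass to the limit term by term: the nonlocal bilinear form is continuous on $X_0^s(\Omega)$; the lower-order contribution $\int \psi(x,u_i)\varphi\,dx$ converges by dominated convergence since the linear growth $0 \le \psi(x,t) \le C(1+|t|)$ together with $u_i \in L^2(\Omega)$ gives $\psi(\cdot,u_i) \in L^2(\Omega)$. The growth assumption on $\psi$ and the $L^\infty(\Omega^c)$-control on $u_1,u_2$ are precisely what make every term in the inequality finite, so the passage to the limit is legitimate and the argument closes.
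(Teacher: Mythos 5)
Your proof is correct and follows essentially the same route as the paper's: test with $\varphi=(u_2-u_1)^+$, apply a pointwise positivity estimate to the nonlocal bilinear form, and use monotonicity of $\psi$ for the lower-order term; your inequality $(a-b)(a^+-b^+)\ge(a^+-b^+)^2$ is exactly what the paper's explicit three-term expansion encodes. One small point: the closing density argument is unnecessary, since the lemma's hypothesis is already stated for all nonnegative $\varphi\in X_0^s(\Omega)$ (not just $\varphi\in C_0^2(\Omega)$), so $(u_2-u_1)^+$ is directly admissible once you verify it lies in $X_0^s(\Omega)$, as you did.
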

\begin{proof}
Let us choose $\varphi=(u_2-u_1)^+\in X_0^s(\Omega)$. 
It follows that
\begin{align*}
0&\geq \int_{\Omega}(u_2(x)-u_1(x))(-\Delta)^s(u_2-u_1)^+(x)\,dx\\ &\qquad +\int_\Omega[\psi(x,u_2(x))-\psi(x,u_1(x))](u_2-u_1)^+(x)\,dx\\ &=c(N,s)\,\mbox{P.V.} \int_{\mathbb{R}^N\times\mathbb{R}^N}((u_2(x)-u_1(x))-(u_2(y)-u_1(y)))\cdot\\ &\qquad\cdot  ((u_2(x)-u_1(x))^+-(u_2(y)-u_1(y))^+)K(x-y)dx\,dy\\&\qquad +\int_\Omega[\psi(x,u_2(x))-\psi(x,u_1(x))](u_2-u_1)^+(x)\,dx\\ &=c(N,s)\,\mbox{P.V.} \int_{\mathbb{R}^N\times\mathbb{R}^N}\left((u_2(x)-u_1(x))^+-(u_2(y)-u_1(y))^+\right)^2 K(x-y)\,dx\,dy\\ 
&\qquad +c(N,s)\,\mbox{P.V.}\int_{\mathbb{R}^N\times\mathbb{R}^N}(u_2(x)-u_1(x))^+(u_2(y)-u_1(y))^- K(x-y)dx\,dy\\
&\qquad +c(N,s)\,\mbox{P.V.}\int_{\mathbb{R}^N\times\mathbb{R}^N}(u_2(x)-u_1(x))^-(u_2(y)-u_1(y))^+ K(x-y)\,dx\,dy\\
&\qquad +\int_\Omega[\psi(x,u_2(x))-\psi(x,u_1(x))](u_2-u_1)^+(x)\,dx.
\end{align*}
Since every term in the last expression is nonnegative, we obtain that $(u_2-u_1)^+=0$ a.e. in $\mathbb{R}^N$, which leads to  $u_2\leq u_1$ a.e. in $\mathbb{R}^N$. Therefore the proof is completed.  
\end{proof}
Consider now the auxiliary problem \eqref{aux-2ndsolution} with the particular choice $$g(x,u)=h(x)-\psi(x,u),$$ where $\psi$ verifies the hypotheses of Lemma \ref{lemweakcomp}, and $h$ is a function that belongs to $L^2(\Omega)$. 
Under these assumptions on $\psi$ and $h$, we will are able to prove existence of a solution for the associated equation for this $g$, which is the content of next lemma. 
\begin{lemma}\label{lemaT}
Let $\psi$ be a Carath\'eodory function satisfying the hypotheses of Lemma \ref{lemweakcomp}. 
Then, for every $h\in L^2(\Omega)$, the problem
\begin{equation}\label{Pg}
\left\{
\begin{array}[c]{lll}
(-\Delta)^s u+\psi(x,u) = h(x) &\mbox{in } \Omega, \\
u=0&\mbox{on } \Omega^c,
\end{array}
\right.
\end{equation}
admits a unique weak solution $u\in X_0^s(\Omega)$. 
Moreover, the associated operator $T:L^2(\Omega)\to X_0^s(\Omega)$, $h\mapsto u$ is nondecreasing, and strong-strong continuous, that is, if $h_k \to h$ strongly in $L^2(\Omega)$, then $T h_k \to Th$ strongly in $X^s_0(\Omega)$.
\end{lemma}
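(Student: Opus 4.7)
The plan is to obtain the three assertions (existence--uniqueness, monotonicity, strong--strong continuity) through three short arguments: a direct minimization of a strictly convex coercive energy on $X_0^s(\Omega)$ for the first; a single application of the weak comparison principle (Lemma \ref{lemweakcomp}) for the second; and a difference-testing argument exploiting the monotonicity of $\psi(x,\cdot)$ for the third.

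For existence and uniqueness I would introduce the primitive $\Psi(x,t):=\int_0^t\psi(x,\tau)\,d\tau$ and minimize
\begin{equation*}
I(u):=\tfrac{1}{2}\|u\|^2_{X_0^s(\Omega)}+\int_\Omega\Psi(x,u(x))\,dx-\int_\Omega h(x)u(x)\,dx
\end{equation*}
over $X_0^s(\Omega)$. Because $\psi(x,0)=0$, $\psi\geq 0$ and $\psi(x,\cdot)$ is nondecreasing, one has $\psi(x,t)=0$ for $t\leq 0$ and hence $\Psi\geq 0$ on $\Omega\times\mathbb{R}$; combined with the linear growth $0\leq\psi(x,t)\leq C(1+|t|)$ this yields $0\leq\Psi(x,t)\leq C|t|(1+|t|)$, so the $\Psi$-term is well-defined and finite on $X_0^s(\Omega)\hookrightarrow L^2(\Omega)$. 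Then $I$ is strictly convex (quadratic plus convex plus linear), coercive (since $\Psi\geq 0$ and $|\int hu|\leq \|h\|_{L^2(\Omega)}\|u\|_{L^2(\Omega)}\leq C\|h\|_{L^2(\Omega)}\|u\|_{X_0^s(\Omega)}$ by Lemma \ref{lemma-inclusion}(iii)), and weakly lower semicontinuous on $X_0^s(\Omega)$ (the $\Psi$-term is in fact weakly continuous via the compactness of $X_0^s(\Omega)\hookrightarrow L^2(\Omega)$ and dominated convergence). Thus $I$ admits a unique minimizer $u\in X_0^s(\Omega)$, whose Euler--Lagrange equation is exactly the weak formulation of (\ref{Pg}).

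For monotonicity take $h_1\leq h_2$ in $L^2(\Omega)$ and set $u_i:=Th_i$. Both $u_1,u_2$ vanish on $\Omega^c$, and for every nonnegative $\varphi\in X_0^s(\Omega)$ the two weak equations give
\begin{equation*}
\int_\Omega u_1(-\Delta)^s\varphi\,dx+\int_\Omega\psi(x,u_1)\varphi\,dx=\int_\Omega h_1\varphi\,dx\leq\int_\Omega h_2\varphi\,dx=\int_\Omega u_2(-\Delta)^s\varphi\,dx+\int_\Omega\psi(x,u_2)\varphi\,dx,
\end{equation*}
so Lemma \ref{lemweakcomp} (with the roles of $u_1,u_2$ interchanged) yields $u_1\leq u_2$ a.e.\ in $\Omega$. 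For the continuity, let $h_k\to h$ in $L^2(\Omega)$ and set $u_k:=Th_k$, $u:=Th$; since $u_k-u\in X_0^s(\Omega)$, subtracting the two weak equations and using $u_k-u$ as test function produces
\begin{equation*}
\|u_k-u\|^2_{X_0^s(\Omega)}+\int_\Omega\bigl[\psi(x,u_k)-\psi(x,u)\bigr](u_k-u)\,dx=\int_\Omega(h_k-h)(u_k-u)\,dx.
\end{equation*}
The second integral on the left is nonnegative by the monotonicity of $\psi(x,\cdot)$, while the right-hand side is bounded by $C\|h_k-h\|_{L^2(\Omega)}\|u_k-u\|_{X_0^s(\Omega)}$ via Cauchy--Schwarz and the Sobolev embedding. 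Dividing out one factor of $\|u_k-u\|_{X_0^s(\Omega)}$ yields $\|u_k-u\|_{X_0^s(\Omega)}\leq C\|h_k-h\|_{L^2(\Omega)}\to 0$, so in fact $T$ is Lipschitz from $L^2(\Omega)$ into $X_0^s(\Omega)$.

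I expect the only mildly delicate point to be verifying the weak lower semicontinuity (really, weak continuity) and finiteness of the $\Psi$-term: both reduce to the compact embedding $X_0^s(\Omega)\hookrightarrow L^2(\Omega)$ of Lemma \ref{lemma-inclusion}(iii) together with the bound $0\leq\Psi(x,t)\leq C(|t|+t^2)$, after which the three steps above chain together without any nonlocal-specific obstruction.
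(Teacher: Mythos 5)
Your proposal is correct, and for the existence--uniqueness and monotonicity steps it follows essentially the same route as the paper: minimize the same convex coercive energy $I$ on $X_0^s(\Omega)$ (the paper writes the Gagliardo term as $\tfrac14\int|u(x)-u(y)|^2K\,dx\,dy$, which equals your $\tfrac12\|u\|^2_{X_0^s(\Omega)}$), then derive monotonicity of $T$ directly from Lemma \ref{lemweakcomp}. The genuine difference is in the continuity of $T$. You subtract the two weak formulations, test with $u_k-u\in X_0^s(\Omega)$, observe that $\int_\Omega[\psi(x,u_k)-\psi(x,u)](u_k-u)\,dx\geq 0$ by the monotonicity of $\psi(x,\cdot)$, and discard it; Cauchy--Schwarz and Lemma \ref{lemma-inclusion}(iii) then give the Lipschitz bound $\|u_k-u\|_{X_0^s(\Omega)}\leq C\|h_k-h\|_{L^2(\Omega)}$ in a single stroke. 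The paper instead first establishes a uniform $X_0^s$-bound on $\{u_k\}$, extracts a weakly convergent subsequence, passes to the limit in the weak formulation (using the compact embedding, a.e.\ convergence, and the Carath\'eodory property of $\psi$), appeals to uniqueness to identify the weak limit with $Th$, and only afterwards invokes the same energy identity to upgrade weak to strong convergence. Your route is shorter, avoids the subsequence extraction entirely, and yields the stronger quantitative conclusion that $T$ is Lipschitz; it exploits the monotonicity of $\psi$ earlier and more fully, whereas the paper defers it to the final inequality. The only cosmetic point is that you assert the $\Psi$-term is \emph{weakly continuous} on $X_0^s(\Omega)$; weak lower semicontinuity (via Fatou, as the paper does, or via the dominated-convergence argument you sketch) suffices, though your stronger claim is also true here because $0\leq\Psi(x,t)\leq C(|t|+t^2)$ and the embedding into $L^2(\Omega)$ is compact.
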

\begin{proof}
Let $u\in X_0^s(\Omega)$ and define 
$$
I(u)=\frac{1}{4}\int_{\mathbb{R}^N\times\mathbb{R}^N} |u(x)-u(y)|^2 K(x-y)\,dx\,dy+\int_{\Omega}\Psi(x,u)\,dx-\int_{\Omega}h(x)u(x)\,dx,
$$
where $\Psi(x,t)=\int_0^t\psi(x,\tau)\,d\tau \ge 0$. 
It is not difficult to show that the functional $I$ is coercive and weak lower semicontinuous. 
Indeed, by H\"older's inequality, Lemma \ref{lemma-inclusion}, and Young's inequality, we get
\begin{align*}
I(u)&\ge\frac{1}{4}\|u\|^2_{X_0^s(\Omega)}-\|h\|_{L^2(\Omega)}\|u\|_{L^2(\Omega)}\\
&\ge \left( \frac{1}{4} -\varepsilon C\right)\|u\|^2_{X_0^s(\Omega)}-C_{\varepsilon}\|h\|_{L^2(\Omega)}^2
\end{align*}
for all sufficiently small $\varepsilon>0$. 
Note that we can fix a nonnegative constants $B$ and a positive constant $C$ such that for every $\varepsilon>0$ sufficiently small  
$$
I(u) \ge C\|u\|_{X_0^s(\Omega)}^2 -B\quad\mbox{ for all }u\in X_0^s(\Omega).
$$
Therefore, $I$ is coercive. 

The lower semicontinuity follows from the lower semicontinuity of the ${X_0^s(\Omega)}$-norm, (iii) of Lemma \ref{lemma-inclusion}, and the fact that 
$$
\liminf_{k\to \infty}\int_{\Omega}\Psi(x,u_k)\,dx \ge \int_{\Omega}\Psi(x,u)\,dx
$$
if $u_k\rightharpoonup u$ in $X_0^s(\Omega)$, thanks to Fatou's Lemma. 
Notice that again by (iii) Lemma \ref{lemma-inclusion}, we get the strongly $L^2(\Omega)$ convergence by taking a subsequence. 

Consequently, by the direct method of calculus of variations, there exists a unique $u\in X_0^s(\Omega)$ such that 
$$
I(u)=\inf_{v\in X_0^s(\Omega)}I(v).
$$
The fact that $T$ is nondecreasing follows from Lemma \ref{lemweakcomp}. Let us see $T$ is strong-strong continuous. 

Let $\{h_k\}_{k\in  \mathbb{N} } \subset L^2(\Omega)$ be such that $h_k \to h$ strongly in $L^2(\Omega)$. 
Denote $u_k=Th_k$, $u=Th$. Since $\psi\ge 0$ and $u_k$ satisfies the equation in (\ref{Pg}), from H\"older's inequality and Lemma \ref{lemma-inclusion} we deduce that
$$
\|u_k\|_{X^s_0(\Omega)}^2 \le \|h_k\|_{L^2(\Omega)} \|h_k\|_{L^2(\Omega)} \le C\|h_k\|_{L^2(\Omega)} \|u_k\|_{X^s_0(\Omega)} \le C\|u_k\|_{X^s_0(\Omega)}.
$$
Then, $\{u_k\}_{k\in \mathbb{N}} \subset X_0^s(\Omega)$ is a bounded sequence. 
Thus, there exists a subsequence $\{u_{k_j}\}_{j\in \mathbb{N}}$ such that $u_{k_j} \rightharpoonup v$ weakly in $X_0^s(\Omega)$. 
Thanks to Lemma \ref{lemma-inclusion}, we can assume in addition $u_{k_j}\to v$ strongly in $L^2(\Omega)$ and a.e. in $\Omega$ by taking another subsequence if necessary.

Let $\varphi \in X_0^s(\Omega)$ and consider it as a test function for the equation in (\ref{Pg}) associated to $h_k$. Then,
$$
\begin{array}[c]{lll}
\displaystyle \frac{1}{2}\int_{\mathbb{R}^N\times \mathbb{R}^N}\!(u_{k_j}(x)-u_{k_j}(y))(\varphi(x)-\varphi(y))K(x-y)\, dx\,dy + \int_{\Omega} \psi(x,u_{k_j}(x))\varphi(x) \, dx\smallskip\\ 
\displaystyle \qquad = \int_{\Omega} h_{k_j}\varphi \, dx.
\end{array}
$$ 
Since  $h_k \to h$ strongly in $L^2(\Omega)$, $u_{k_j} \rightharpoonup v$ weakly in $X_0^s(\Omega)$, $u_{k_j}\to v$ strongly in $L^2(\Omega)$ and a.e. in $\Omega$ as $k\rightarrow +\infty$; and bearing in mind that $\psi$ is a Carath\'eodory function, by taking the limit $j\to \infty$ we get
$$
\frac{1}{2}\int_{\mathbb{R}^N\times \mathbb{R}^N}\!(v(x)-v(y))(\varphi(x)-\varphi(y))K(x-y)\, dxdy + \int_{\Omega} \psi(x,v(x))\varphi(x) \, dx = \int_{\Omega} h\varphi \, dx.
$$ 
Due to the uniqueness of solutions to this problem, we get $v=u$. 
Notice that the limit function does not depend on the election of the subsequence. 
Therefore, $u_k \rightharpoonup u$ weakly in $X_0^s(\Omega)$.

To deduce the strong convergence in $X_0^s(\Omega)$, notice that 
$$
\| u_k -u \|_{X_0^s(\Omega)}^2 = \int_{\Omega}(h_k-h)(u_k-u)\, dx - \int_{\Omega}(\psi(x,u_k)-\psi(x,u))(u_k-u)\, dx.
$$
Hence, by taking the limit as $k\rightarrow +\infty$, we get that $h_k \to h$ strongly in $L^2(\Omega)$, $u_{k}\to u$ strongly in $L^2(\Omega)$ and a.e. in $\Omega$; and since $\psi$ is a Carath\'eodory function, we get $u_k \to u$ strongly in $X_0^s(\Omega)$, which implies the strong-strong continuity of $T$. 
\end{proof}

\begin{theorem}\label{subandsuper}
Let $g\colon \Omega \times \mathbb{R}\to  \mathbb{R}$ be such that hypotheses $(G_1)$-$(G_2)$ are satisfied. 
Consider the problem 
\begin{equation}\label{PL}
\left\{
\begin{array}[c]{lll}
(-\Delta)^s u = g(x,u) &\mbox{in }\Omega, \smallskip\\
u=0&\mbox{on }  \mathbb{R}^N\setminus\Omega.
\end{array}
\right.
\end{equation}
Assume there exist $\underline{u},\overline{u}\in X^s(\Omega)\cap L^\infty(\Omega^c)$, a sub-solution and a super solution, respectively, with  $\underline{u}(x)\leq\overline{u}(x)$ a.e. in $\Omega$. 
Then, there exists a minimal (and, respectively, a maximal) weak solution $u_*$ (resp. $u^*$) for the problem (\ref{PL}) in the ``interval'' 
$$
[\underline{u},\overline{u}]=\{u\in L^\infty(\Omega): \underline{u}(x)\leq u(x)\leq \overline{u}(x) \mbox{ a.e. in }\Omega\}.
$$ 
\end{theorem}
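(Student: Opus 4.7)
The plan is to construct the minimal solution $u_*$ by a monotone iteration based on the solution operator $T$ from Lemma \ref{lemaT}, starting from $\underline{u}$, and to obtain the maximal solution $u^*$ by the symmetric descending iteration started from $\overline{u}$. Using $(G_2)$, I rewrite (\ref{PL}) as
\[
(-\Delta)^s u + Mu = g(x,u) + Mu,
\]
whose right-hand side is a.e.\ nondecreasing in $u$. Since I only seek solutions in the order interval $[\underline{u},\overline{u}]$, which is bounded in $L^\infty(\Omega)$, I truncate by defining
\[
\tau(x,t) := \max\{\underline{u}(x),\min\{t,\overline{u}(x)\}\},\qquad \tilde g(x,t) := g(x,\tau(x,t)) + M\,\tau(x,t);
\]
by $(G_1)$--$(G_2)$, $\tilde g$ is a Carath\'eodory function, nondecreasing in $t$, and uniformly bounded, so $\tilde g(\cdot,v) \in L^2(\Omega)$ for every measurable $v$. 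I then set $u_0 := \underline{u}$ and, for $n\ge 0$, $u_{n+1} := T(\tilde g(\cdot,u_n)) \in X_0^s(\Omega)$, where $T$ is the operator of Lemma \ref{lemaT} associated with $\psi(x,t) = Mt$; the coercivity argument in the proof of Lemma \ref{lemaT} still applies since $\Psi(x,t) = Mt^2/2 \ge 0$.

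The key step is to show by induction that $\underline{u} \le u_n \le u_{n+1} \le \overline{u}$ a.e.\ in $\Omega$ for every $n\ge 0$. For the base case, the subsolution property $(-\Delta)^s \underline{u} + M\underline{u} \le \tilde g(\cdot,\underline{u})$ combined with $(-\Delta)^s u_1 + Mu_1 = \tilde g(\cdot,\underline{u})$ yields $\underline{u}\le u_1$ via Lemma \ref{lemweakcomp}; the analogous comparison against $\overline{u}$ gives $u_1 \le \overline{u}$. The inductive step follows because $u_n \le u_{n+1}$ implies $\tilde g(\cdot,u_n) \le \tilde g(\cdot,u_{n+1})$, and another application of Lemma \ref{lemweakcomp} to the equations for $u_{n+1}$ and $u_{n+2}$ gives $u_{n+1}\le u_{n+2}$. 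Once monotonicity and boundedness are in place, dominated convergence yields $u_n \to u_*$ a.e.\ and in $L^2(\Omega)$; continuity of $\tilde g$ in its second variable provides $\tilde g(\cdot,u_n) \to \tilde g(\cdot, u_*)$ in $L^2(\Omega)$; and the strong-strong continuity of $T$ gives $u_{n+1} \to T(\tilde g(\cdot, u_*))$ in $X_0^s(\Omega)$. By uniqueness of limits $u_* = T(\tilde g(\cdot, u_*))$, and since $u_* \in [\underline{u},\overline{u}]$ the truncation is inactive, so $u_*$ is a weak solution of (\ref{PL}).

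Minimality is obtained by the same comparison scheme: if $u \in [\underline{u},\overline{u}]$ is any solution of (\ref{PL}), induction using Lemma \ref{lemweakcomp} shows $u_n\le u$ for all $n$ (the step $u_n \le u$ yields $\tilde g(\cdot,u_n) \le \tilde g(\cdot,u)$, whence $u_{n+1}\le u$ by comparing the equations satisfied by $u_{n+1}$ and $u$), and passing to the limit gives $u_*\le u$. The maximal solution $u^*$ is produced by the mirror iteration started from $u_0:=\overline{u}$ and iterating downward, with all inequalities reversed. The principal technical obstacle I anticipate lies in the very first comparison step, because $\underline{u}$ and $\overline{u}$ live only in $X^s(\Omega) \cap L^\infty(\Omega^c)$ rather than in $X_0^s(\Omega)$: one has to match the sub/super-solution inequalities, which are formulated against $C_0^2(\Omega)$ test functions via the operator $(-\Delta)^s$ acting on the test, with the weak formulation satisfied by $u_1 \in X_0^s(\Omega)$ in the bilinear form; this is carried out through the integration-by-parts identity recalled from \cite{Molica-Radulescu-Servadei}, and it is here that one implicitly uses $\underline{u}\le 0\le \overline{u}$ on $\Omega^c$ to ensure that truncations such as $(\underline{u}-u_1)^+$ and $(u_1-\overline{u})^+$ actually belong to $X_0^s(\Omega)$.
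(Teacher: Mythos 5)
Your proposal is correct and follows essentially the same route as the paper: a monotone iteration $u_{n+1}=T(g(\cdot,u_n)+Mu_n)$ (the paper's $H=T\circ S$) starting from $\underline{u}$ and $\overline{u}$, with Lemma \ref{lemweakcomp} providing the order-preservation and Lemma \ref{lemaT} the well-posedness and continuity of the iteration map, and then passage to the monotone limits. The only cosmetic difference is your explicit truncation $\tau$ to make the right-hand side globally defined and bounded, where the paper instead simply restricts the operator $S$ to the order interval $[\underline{u},\overline{u}]$; your closing remark correctly flags the same point the paper leaves tacit, namely that matching the $C^2_0$-tested sub/super-solution inequalities with $X_0^s$-tested weak formulations, and guaranteeing $\underline{u}\le 0\le\overline{u}$ on $\Omega^c$, are the only places where care is required in the first comparison step.
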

\begin{proof}
	Consider the set $[\underline{u},\overline{u}]$ with the topology
	of convergence a.e., and define the operator
	$S:[\underline{u},\overline{u}]\to L^{2}(\Omega)$
	by
	$$
	Sv(\cdot)=g(\cdot,v(\cdot))+Mv(\cdot)\in
	L^\infty(\Omega) \subset L^2(\Omega)
	$$
	for any $v\in[\underline{u},\overline{u}]$. By $(H_1)$-$(H_2)$, we
	find that $S$ is nondecreasing and bounded. 
	
	Moreover, if
	$v_n,v\in[\underline{u},\overline{u}]$ are such that 

$v_n\to v$ a.e. in $\Omega$. Thanks to $(H_1)$, $g(x, \cdot)$ is continuous. Therefore, $g(x, v_n(x)) \to g(x, v(x))$ in $ \mathbb{R}$, a.e. $x\in\Omega$. Clearly, $Mv_n(x)\to Mv(x)$ a.e. $x\in\Omega$. Again, by $(H_1)$ and the Dominated Lebesgue Convergence Theorem,  $S(v_n)\to S(v)$ strongly in $L^2(\Omega)$. 

We have proved the a.e-strong continuity of the operator $S$, that is, $S(v_n)\to S(v)$ strongly in $L^2(\Omega)$, if $v_n\to v$ a.e. in $\Omega$, with $v_n,v \in [\underline{u},\overline{u}]$.
	
Consider $H:[\underline{u},\overline{u}]\to X^{s}_0(\Omega)$ the nondecreasing operator defined by $H=T\circ S$, where $T$ is given in Lemma \ref{lemaT}.  That is, for a function
$v\in[\underline{u},\overline{u}]$, $H(v)$ is the unique weak
solution of the boundary value problem
$$
\begin{cases}
(-\Delta)^s u+Mu=g(x,v)+Mv &\mbox{in }\Omega,
\\
\hspace{2.2cm} u=0 & \mbox{in }\Omega^c.
\end{cases}
$$
Notice that $H$ is a.e-strong continuous, that is, $H(v_n)\to H(v)$ strongly in $X_0^s(\Omega)$, if $v_n\to v$ a.e. in $\Omega$, with $v_n,v \in [\underline{u},\overline{u}]$.

Let $u_1=H(\underline{u})$ and $u^1=H(\overline{u})$. Then, for every
$\phi\in X^s_0(\Omega)$ with $\phi\geq 0$ we have that
\begin{align*}
\int_\Omega u_1(-\Delta)^s\phi &
+\int_\Omega Mu_1\phi
\\
& =
\int_\Omega(g(x,\underline{u})+M\underline{u})\phi
\\
& \geq \int_\Omega\underline{u}(-\Delta)^s\phi+\int_\Omega M\underline{u}\phi
\\
\intertext{and} \int_\Omega 	u^1(-\Delta)^s\phi & +\int_\Omega Mu^1\phi
\\
& =
\int_\Omega(g(x,\overline{u})+M\overline{u})\phi\\
& \leq \int_\Omega \overline{u}(-\Delta)^s\phi+\int_\Omega M\overline{u}\phi.
\end{align*}
Applying Lemma \ref{lemweakcomp}, and taking into account that $H$ is
nondecreasing, we obtain that $\underline{u}\leq
H(\underline{u})\leq H(u)\leq H(\overline{u})\leq\overline{u}$,
a.e. in $\Omega$, for any $u\in[\underline{u},\overline{u}]$. 
	
By the same reasoning, we can prove the existence of sequences
$\{u^n\}_{n\in  \mathbb{N}}$ and $\{u_n\}_{n\in  \mathbb{N}}$ satisfying
\begin{align*}
& u^0 =\overline{u},\quad u^{n+1}=H(u^n),
\\
& u_0=\underline{u},\quad u_{n+1}=H(u_n),
\end{align*}
and, for every weak solution $u\in[\underline{u},\overline{u}]$ of
(PL), we have that
$$
u_0\leq u_1\leq\ldots\leq u_n\leq u\leq u^n\leq\ldots\leq u^1\leq
u^0\mbox{ a.e. in }\Omega.
$$
Then, $u_n\to u_*$, $u^n\to u^*$, a.e. in $\Omega$, with
$u_*,u^*\in[\underline{u},\overline{u}]$, $u_*\leq u^*$ a.e. in
$\Omega$. Since $u_{n+1}=H(u_n)\to H(u_*)$, and $u^{n+1}=H(u^n)\to
H(u^*)$ in $X_0^s(\Omega)$ by the continuity of $H$,
we find that $u_*,u^*\in X_0^s(\Omega)$ with
$u_*=H(u_*)$, $u^*=H(u^*)$. This completes the proof. 
\end{proof}

Now, we are able to prove our main result, Theorem \ref{main-theorem0}. 

\begin{proof}[Proof of Theorem \ref{main-theorem0}.]
Accoding Theorem \ref{main}, there exists a $\lambda^*$ such that the problem (\ref{P}) admits a solution $u_\lambda$ for any $\lambda>\lambda^*$. Moreover, following the arguments given in the proof of Theorem \ref{main},  
$$
\|u_\lambda\|_\infty<1 \quad  \text{for all } \lambda>\lambda^*.
$$ 
Hence, if we fix $\lambda_0>\lambda^*$, then $\underline{u}=u_{\lambda_0}$ is a sub-solution of problem (\ref{P}) for any $\lambda>\lambda_0$. Indeed, given $\phi \in C_0^2(\Omega)$ such that $\phi \ge 0$ in $\Omega$, we have
\begin{align*}
\int_{\Omega} u_{\lambda_0} (-\Delta)^s \phi \, dx &= \int_{\Omega} \phi  (-\Delta)^s u_{\lambda_0}\, dx \\
&= \lambda_0 \int_{\Omega} f(u_{\lambda_0}) \phi \, dx \\
&< \lambda \int_{\Omega} f(u_{\lambda_0}) \phi \, dx.
\end{align*}
On the other hand, $\overline{u}=1$ is a super-solution of the problem (\ref{P}) for any $\lambda>\lambda_0$. Indeed,  
\begin{align*}
\int_{\Omega}  (-\Delta)^s \phi \, dx &\ge \int_{\Omega} u_{\lambda} (-\Delta)^s  \phi  \, dx \\
&= \lambda \int_{\Omega} f(u_{\lambda}) \phi \, dx\\
&\geq  0\\
&=\lambda \int_{\Omega} f(1) \phi \, dx.
\end{align*}
Hence, by  Theorem \ref{subandsuper}, if we put $g(x,u)= \lambda f(u)$ for any $\lambda> \lambda_0$,  then we obtain another solution $v_\lambda$ of (\ref{P}) for every $\lambda>\lambda_0$. Observe that from Theorem \ref{main}, 
$$
\lim_{\lambda \to +\infty}\|u_\lambda\|_{L^{\infty}(\Omega)}=0.
$$
Therefore, $\lambda_0$ can be chosen sufficiently large, so that $u_\lambda<\underline{u}\leq v_\lambda\leq 1=\bar{u}$. In this way, the proof is completed  if we take $\overline{\lambda}=\lambda_0$. 
\end{proof}


\end{document}